\numberwithin{equation}{section}
\numberwithin{figure}{section}
\numberwithin{table}{section}
\def\theequation{\arabic{section}.\arabic{equation}}
\newtheorem{theorem}{Theorem}[section]
\newtheorem{corollary}{Corollary}[section]
\newenvironment{proof}[1][Proof]{\medskip\noindent\textbf{#1.} }{ $\square$\medskip}
\begin{document}

\bigskip
\noindent
{\Large \bf Aggregate claims when their sizes and arrival times are dependent and governed by a general point process}
\vspace*{8mm}

\noindent
{\large Kristina P. Sendova and Ri\v{c}ardas Zitikis}

\bigskip
\noindent
\textit{Department of Statistical and Actuarial Sciences, University of Western Ontario, 1151 Richmond Street North,
London, Ontario, N6A 5B7, Canada}

\vspace*{8mm}
\noindent
\textbf{Abstract.}
We suggest a general method for analyzing aggregate insurance claims that arrive according to a very general point process, known in the literature as the order statistic point process, which includes as special cases the classical compound Poisson and the Sparre Andersen models, among many others. We also allow for the process to govern claim sizes via a general dependence structure that relates claim sizes to claim and/or inter-claim times. The obtained general results are supplemented with special closed-form illustrative formulas, that also exemplify the potential for future research in the area.

\bigskip
\noindent
\textit{Keywords}: Aggregate claims; point process; order statistic; order statistic point process; OS-point process; claim frequency; claim severity.

\newpage

\section{Introduction}
\label{sc: Introduction}

We are interested in building a general and encompassing stochastic model for  aggregate losses that insurance companies accumulate over certain periods of time. Naturally, the model is based on a claim-arrival process, which we seek to be a very general point process so that well studied models such as the compound Poisson and  Sparre Andersen models would be included. We refer to, for example, Li (2008), Ren (2008), L\'eveill\'e et al. (2010), and references therein, for results involving the aggregate (discounted) claim process under the Sparre Andersen model.

Our interest in the problem has also been motivated by other intriguing studies such as modelling hurricane losses, as initiated in the works by Garrido and Lu (2004), and Lu and Garrido (2005, 2006). These authors assume that claims arrive according to a non-homogeneous Poisson process, which can reflect the seasonality and other periodic features of claim arrivals.

In addition to the general claim arrival process, we also tackle situations when claim sizes are dependent on preceding inter-claim times, as in the work by Boudrault et al. (2006). Such extensions and generalizations facilitate modelling aggregate claims related to, for example, volcano eruptions and earthquakes, when longer time intervals between consecutive volcano eruptions or earthquakes often result in more severe damages. We tackle even more general situations when claim sizes depend on their arrival times, which allows us to incorporate features such as the seasonality of certain claim arrivals (e.g., Garrido and Lu, 2004; Lu and Garrido, 2005, 2006).

We next develop some mathematical formalism. Let $T_ 1, T_2, \dots $ denote claim arrival times, and let $X_ 1, X_2, \dots $ be the corresponding claim sizes. Let $N(t)$ denote the number of claims that  arrive up to and including time $t>0$. Hence, $N(t)$ is the largest integer such that
$0<T_1 < \cdots < T_{N(t)}\le t$. The aggregate claim size of all the claims up to and including the time $t>0$ is therefore the sum
\[
S(t) =\sum_{i=1}^{N(t)}{X_i},
\]
where, by definition, $S(t)=0$ when $N(t)=0$. Hence, the claim-counting process $N(t)$, $t\ge 0$, is a right-continuous stochastic process with unit steps at the time instances $T_i$ and such that $N(0)=0$. Denote the inter-claim times by $V_i=T_i-T_{i-1}$, with $T_0=0$ by definition.

One of the most basic examples of such a claim-counting process is the homogenous Poisson process under which the inter-claim times are independent and exponentially distributed random variables. Assuming that the claim sizes are independent and identically distributed (i.i.d.), and independent of the claim-counting process $N(t)$, the aggregate claim size $S(t)$, $t\ge 0$, is a compound Poisson process (e.g., Klugman et al., 2008, Section 11.1.2). In this case we have the formulas
\begin{equation}
{\mathbf{E}[S(t)] \over t } =\mathbf{E}[X_1]\lambda_0
\label{mean-var-CPa}
\end{equation}
and
\begin{equation}
{\mathbf{Var}[S(t)] \over  t } =\mathbf{E}[X_1^2]\lambda_0,
\label{mean-var-CPb}
\end{equation}
where $\lambda_0 $ is the Poisson parameter.

More generally, assuming the Sparre Andersen model, for which the inter-claim times are i.i.d. random variables with a common arbitrary distribution and independent of the claim sizes, we have that
$\mathbf{E}[S(t)] = \mathbf{E}[N(t)]\mu $ and
$\mathbf{Var}[S(t)] = \mathbf{Var}[N(t)]\mu^2+\mathbf{E}[N(t)]\sigma^2$, where $\mu$ is the mean of $X_1$, and $\sigma^2$ is the variance of $X_1$.

In this paper we consider a very general claim-counting
process, which is called the order statistic point process or, for short, the OS-point process. It includes such classes of point processes as the non-homogenous Poisson process, the linear birth process, and a number of other ones
(see, e.g., Berg and Spizzichino, 2000, and Debrabant, 2008). We concentrate our attention in this direction of generality and thus deliberately restrict ourselves to only calculating the mean $\mathbf{E}[S(t)]$ and the second moment $\mathbf{E}[S^2(t)]$ of the aggregate claim $S(t)$, thus keeping mathematical complexities within reasonable limits and facilitating a clearer and more transparent exposition of main ideas, results and their proofs. If desired, the herein presented ideas and techniques can be extended to more complex functionals of $S(t)$ such as its Laplace transform, moment generating function, etc.

Furthermore, we allow each claim size $X_i$ depend on its arrival time $T_i$ as well as on the preceding inter-claim time $V_{i}$, or equivalently on $V_i$ and $T_{i-1}$, or perhaps just on the inter-claim time $V_{i}$. Such dependence structures reflect our belief that, in some situations, the longer a claim does not arrive, the larger the claim is expected to be. This is a common situation with claims resulting from earthquakes, hurricanes, and other catastrophic events.

The rest of the paper is organized as follows. In Section \ref{section-2} we recall basic facts about the order statistic point process, introduce some notation, and  derive general formulas for the mean $\mathbf{E}[S(t)]$ and the second moment $\mathbf{E}[S^2(t)]$. In Section \ref{section-3} we specialize these general formulas to the mixed Poisson process, and then even further specialize them by assuming the dependence structure between the claim sizes and inter-claim times as suggested by  Boudreault et al. (2006). These special cases, though obviously of independent interest, also illustrate how our general results of Section \ref{section-2} can be utilized in particular situations. Section \ref{section-4} develops analogous results for the second moment $\mathbf{E}[S^2(t)]$ and thus for the variance $\mathbf{Var}[S(t)]$ in the case of the mixed Poisson process and the dependence structure of Boudreault et al. (2006). Section \ref{section-5} concludes the paper. To facilitate an easier reading of main results, we have relegated most of the proofs to an appendix.

\section{The OS-point process and general formulas}
\label{section-2}

Following Crump (1975), we say that $N(t)$, $t\ge 0$, is an order statistic point process (i.e., OS-point process) if for every $t>0$ and $n\ge 0$ with the positive probability
\begin{equation}\label{eq: pi t,n}
\pi_{t,n}=\mathbf{P}[N(t)=n],
\end{equation}
we have that, conditionally on the event $N(t)=n$, the claim arrival times $T_1,\dots , T_n $ have the same joint distribution as the order statistics of i.i.d. random variables $\tau_1,\dots , \tau_n $ with a common cumulative distribution function (cdf) $F_t(x)$, $x \ge 0$, such that $F_t(t)=1$.

Crump (1975) has proved that the cdf $F_t(x)$ is related to the process $N(t)$, $t\ge 0$, via the equation
\begin{equation}
F_t(x)={\mathbf{E}[N(x)] \over \mathbf{E}[N(t)]},\quad 0\le x \le t.
\label{cdf-main}
\end{equation}
For example, the non-homogenous Poisson process $N(t)$, $t\ge 0$, with the finite cumulative intensity function $\Lambda(x)=\int_0^x \lambda(y)dy$, $x\ge 0$,  is an OS-point process, where $\lambda(y)$ is the intensity function. In this case we have that
\begin{equation}
F_t(x)={ \Lambda(x) \over \Lambda(t) },\quad 0\le x \le t.
\label{cdf-nhpp}
\end{equation}

Hence, the homogenous Poisson process with a constant intensity function $\lambda(y)\equiv \lambda_0>0$ is an OS-point process with the cdf
\begin{equation}
F_t(x)={x \over t},\quad 0\le x \le t.
\label{cdf}
\end{equation}
When conditioned on $N(t)=n$, the $n$ claim arrival times $T_i$ can be viewed as the order statistics of i.i.d.\, random variables $\tau_1,\dots, \tau_n$ with the common $[0,t]$-uniform distribution. When conditioned on $N(t)=n$, the $n$ inter-claim times $V_i=T_{i}-T_{i-1}$ can be viewed as $[0,t]$-uniform spacings.

The above example gives rise to the class of point processes with so-called property $\mathcal{P}$, which coincides with the class of mixed Poisson processes. Later in this paper we shall use the latter processes to illustrate general formulas derived in earlier sections. For further examples of OS-point processes, their generalizations, and additional references, we refer to Berg and Spizzichino (2000), and Debrabant (2008).

We note that actuaries have found mixed processes particularly useful for reasons such as modelling claims that fall into several categories (see, e.g., Klugman et al., 2008, Section 6.10). For instance, if we want to differentiate between car insurance policies depending on the primary driver's gender, age or driving history, it is appropriate to model data in each of these categories using different distributions and then mix them in order to provide a model for the entire pool of insurance policies. Due to its attractive properties, the mixed Poisson process is a particularly popular choice when modelling aggregate claim amounts (see, e.g., Rolski et al., 1999, Section 4.3.3).

Coming back to our general framework, the following theorem lays down foundations for calculating the expected aggregate loss $\mathbf{E}[S(t)]$ under various dependence structures between the claim sizes $X_i$, their arrival times $T_i$, and the previous arrival times $T_{i-1}$.

\begin{theorem}\label{theorem-0}
Assume that $N(t)$, $t\ge 0$, is the OS-point process with an absolutely continuous cdf $F_t$ and density $f_t$. Let the distribution of each claim size $X_i$ depend on the claim arrival process only via the times $T_{i-1}$ and $T_{i}$. Then
\begin{align}
\mathbf{E}[S(t)]
&=\sum_{n=1}^{\infty } \pi_{t,n} \Bigg \{ n \int_{0}^{t}\mathbf{E}\big [ X_1 | T_{1}=y \big ]  f_t(y)(1-F_t(y))^{n-1}dy
\notag
\\
&\hspace*{30mm} + \sum_{i=2}^n {n! \over (i-2)!(n-i)!}\int_{0}^{t}\int_{0}^{y}
\mathbf{E}\big [  X_i | T_{i-1}=x, T_{i}=y \big ]
\notag
\\
&\hspace*{40mm} \times f_t(x)f_t(y)F_t^{i-2}(x)(1-F_t(y))^{n-i}dxd y\Bigg \}.
\label{ii-4}
\end{align}
\end{theorem}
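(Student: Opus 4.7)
The plan is to condition on $\{N(t)=n\}$, exploit the defining OS-property to reduce each conditional expectation $\mathbf{E}[X_i\mid N(t)=n]$ to an integral against the joint density of a pair of order statistics, and then reassemble the sum. Concretely, the starting point is the decomposition
\[
\mathbf{E}[S(t)] = \sum_{n=1}^{\infty}\pi_{t,n}\sum_{i=1}^{n}\mathbf{E}[X_i\mid N(t)=n],
\]
handling $i=1$ separately from $i\ge 2$. By the hypothesis that the law of $X_i$ depends on the arrival process only through $(T_{i-1},T_i)$, the tower property yields, for $i\ge 2$,
\[
\mathbf{E}[X_i\mid N(t)=n] = \int_{0}^{t}\int_{0}^{y} \mathbf{E}[X_i\mid T_{i-1}=x,T_i=y]\, g_{i,n}(x,y)\, dx\, dy,
\]
where $g_{i,n}$ is the conditional joint density of $(T_{i-1},T_i)$ given $N(t)=n$; for $i=1$ an analogous one-dimensional integral against the conditional density of $T_1$ alone suffices.

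Next I would invoke the OS-property, which identifies the conditional law of $(T_1,\dots,T_n)$ given $N(t)=n$ with that of the order statistics $\tau_{(1)}<\cdots<\tau_{(n)}$ of i.i.d.\ variables having density $f_t$. The classical formula for the joint density of two adjacent order statistics then gives, for $i\ge 2$ and $0\le x\le y\le t$,
\[
g_{i,n}(x,y) = \frac{n!}{(i-2)!\,(n-i)!}\,F_t(x)^{i-2}f_t(x)f_t(y)(1-F_t(y))^{n-i},
\]
while for $i=1$ the marginal density of the minimum $T_1=\tau_{(1)}$ is $n f_t(y)(1-F_t(y))^{n-1}$, which accounts for the first term on the right-hand side of \eqref{ii-4}. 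Substituting these into the decomposition above and interchanging summation with integration, a routine application of Fubini, produces the claimed identity.

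The main, and really only, delicate point is the justification of the identity $\mathbf{E}[X_i\mid N(t)=n,T_1,\dots,T_n]=\mathbf{E}[X_i\mid T_{i-1},T_i]$ from the stated dependence hypothesis, together with a clean separate treatment of the $i=1$ case, where $T_0=0$ is deterministic and only the density of a single order statistic is needed (the adjacent-pair joint-density formula degenerates there). All remaining steps reduce to standard bookkeeping with the order-statistic joint density and the normalization $F_t(t)=1$.
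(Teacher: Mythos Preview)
Your proposal is correct and follows essentially the same route as the paper's proof: condition on $N(t)=n$, use the tower property together with the dependence assumption to reduce each $\mathbf{E}[X_i\mid N(t)=n]$ to an integral against the conditional density of $(T_{i-1},T_i)$ (or of $T_1$ alone when $i=1$), and then invoke the OS-property to identify these densities with the standard adjacent order-statistic densities. The only superfluous remark is the appeal to Fubini---no interchange of the $n$-sum and the integrals is needed, since the target formula \eqref{ii-4} keeps the sum over $n$ on the outside.
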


The proofs of this and subsequent results are relegated to an appendix due to their technical nature.

We next derive a general formula for the second moment $\mathbf{E}[S^2(t)]$, which together with the above formula for the mean $\mathbf{E}[S(t)]$ produces general formulas for the variance and the standard deviation of $S(t)$.

\begin{theorem}\label{theorem-02}
Let the conditions of Theorem \ref{theorem-0} be satisfied, and let the distribution of each product $X_iX_j$ depend only on the times $T_{i-1}$, $T_{i}$, $T_{j-1}$, $T_{j}$. Then
\begin{equation}
\mathbf{E}[S^2(t)]=\sum_{n=1}^{\infty } \pi_{t,n} \big ( A_{t,n}+2B_{t,n} \big ) ,
\label{ii-42}
\end{equation}
where
\begin{align*}
A_{t,n}&=n \int_{0}^{t}\mathbf{E}\big [ X_1^2 | T_{1}=y \big ]  f_t(y)(1-F_t(y))^{n-1}dy
\notag
\\
& \hspace*{2.5cm} + \sum_{i=2}^n {n! \over (i-2)!(n-i)!}\int_{0}^{t}\int_{0}^{y}
\mathbf{E}\big [  X_i^2 | T_{i-1}=x, T_{i}=y \big ]
\notag
\\
& \hspace*{7.7cm} \times f_t(x)f_t(y)F_t^{i-2}(x)(1-F_t(y))^{n-i}dxd y
\end{align*}
and
\begin{align*}
B_{t,n}&= n(n-1) \int_{0}^{t}\int_{0}^{z} \mathbf{E}\big [ X_1 X_2 | T_{1}=y ,T_{2}=z \big ]
f_t(y)f_t(z)(1-F_t(z))^{n-2}dy dz
\notag
\\
& \quad + \sum_{j=3}^{n} {n! \over (j-3)!(n-j)!} \int_{0}^{t}\int_{0}^{z}\int_{0}^{w}\mathbf{E}\big [ X_1 X_j |T_{1}=y ,T_{j-1}=w ,T_{j}=z  \big ]
\notag
\\
& \hspace*{4.5cm} \times f_t(y)f_t(w)f_t(z)(F_t(w)-F_t(y))^{j-3}(1-F_t(z))^{n-j}dydwdz
\notag
\\
&  \quad + \sum_{i=2}^{n-1}{n! \over (i-2)!(n-i-1)!} \int_{0}^{t}\int_{0}^{z}\int_{0}^{w}\mathbf{E}\big [ X_i X_{i+1} | T_{i-1}=y ,T_{i}=w ,T_{i+1}=z \big ]
\notag
\\
& \hspace*{6cm} \times f_t(y)f_t(w)f_t(z)F_t(y)^{i-2}(1-F_t(z))^{n-i-1}dydwdz
\notag
\\
&  \quad + \sum_{i=2}^{n-2}\sum_{j=i+2}^{n}{n! \over (i-2)!(j-i-2)!(n-j)!}
\notag
\\
&  \hspace*{1cm} \times \int_{0}^{z}\int_{0}^{w}\int_{0}^{y}
\mathbf{E}\big [ X_i X_j | T_{i-1}=x, T_{i}=y ,T_{j-1}=w , T_{j}=z \big ]
\notag
\\
&  \hspace*{1.5cm} \times f_t(x)f_t(y)f_t(w)f_t(z) F_t^{i-2}(x)(F_t(w)-F_t(y))^{j-i-2}(1-F_t(z))^{n-j} dxdydwdz .
\end{align*}
\end{theorem}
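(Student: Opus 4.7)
The plan is to condition on the event $\{N(t)=n\}$ and apply the order statistic representation, writing
\begin{equation*}
\mathbf{E}[S^2(t)] = \sum_{n=1}^{\infty} \pi_{t,n}\, \mathbf{E}\big[S^2(t) \mid N(t)=n\big],
\end{equation*}
and then expanding $S^2(t)=\sum_{i=1}^{n} X_i^2 + 2\sum_{1\le i<j\le n} X_i X_j$ under the conditioning. The diagonal sum will produce the term $A_{t,n}$, and the off-diagonal sum will produce $2B_{t,n}$; this is the origin of the factor $2$ in (\ref{ii-42}).

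The diagonal contribution $A_{t,n}$ is essentially a re-run of the proof of Theorem \ref{theorem-0} with $X_i$ replaced by $X_i^2$. Under $N(t)=n$, I would isolate the $i=1$ summand (for which $T_0=0$ is deterministic so that only $T_1$ enters) and treat $i\ge 2$ separately using the joint density of the $(i-1)$-th and $i$-th order statistics of an i.i.d.\ sample from $F_t$, namely
\begin{equation*}
\frac{n!}{(i-2)!(n-i)!}\, f_t(x)f_t(y)F_t(x)^{i-2}(1-F_t(y))^{n-i},\qquad 0<x<y<t.
\end{equation*}
Since $X_i^2$ depends on the arrival process only through $(T_{i-1},T_i)$, iterated expectation reduces each term to the stated integral of $\mathbf{E}[X_i^2\mid T_{i-1}=x,T_i=y]$.

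The off-diagonal contribution $B_{t,n}$ is the substantive part. For each pair $i<j$, the product $X_iX_j$ depends on the process only through the four indices $T_{i-1},T_i,T_{j-1},T_j$, with the convention $T_0=0$ creating boundary cases. I would therefore partition the pairs $(i,j)$ with $1\le i<j\le n$ into four disjoint classes: (a) $i=1,j=2$, where only $(T_1,T_2)$ matters; (b) $i=1, j\ge 3$, where $(T_1,T_{j-1},T_j)$ are three distinct order statistics; (c) $i\ge 2, j=i+1$, where $(T_{i-1},T_i,T_{i+1})$ are three consecutive order statistics; and (d) $i\ge 2, j\ge i+2$, where all four indices $i{-}1<i<j{-}1<j$ are distinct. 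In each class I would invoke the joint density of the corresponding subset of order statistics from i.i.d.\ $F_t$-samples, obtaining the multinomial coefficient
\begin{equation*}
\frac{n!}{(i_1-1)!(i_2-i_1-1)!\cdots (i_k-i_{k-1}-1)!(n-i_k)!}
\end{equation*}
times the product $\prod_{\ell} f_t(\cdot)$ and the appropriate spacing factors $F_t(\cdot)^{\cdot}$, $(F_t(\cdot)-F_t(\cdot))^{\cdot}$, $(1-F_t(\cdot))^{\cdot}$. Substituting $i_1-1=0$ or $i_{\ell+1}-i_\ell-1=0$ in the boundary classes recovers the precise coefficients listed for each of the four terms in $B_{t,n}$.

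The main obstacle will be the bookkeeping rather than any conceptual difficulty: one must verify that the four-case partition exhausts all index pairs without overlap and that, after factoring out the integer factorial terms, the resulting multinomial coefficients and exponents of $F_t$, $1-F_t$, and the spacings $F_t(w)-F_t(y)$ line up exactly with the statement. The degenerate boundary $T_0=0$ and the possible coalescence $j-1=i$ when $j=i+1$ are the two sources of special cases, and handling them correctly is what generates the first three lines of $B_{t,n}$ as opposed to the generic fourth line.
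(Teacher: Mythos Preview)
Your proposal is correct and follows essentially the same approach as the paper: condition on $N(t)=n$, split $S^2(t)$ into diagonal and off-diagonal parts, handle the diagonal exactly as in Theorem~\ref{theorem-0} with $X_i^2$ in place of $X_i$, and partition the off-diagonal pairs $(i,j)$ into the four cases $(i,j)=(1,2)$, $i=1$ with $j\ge 3$, $i\ge 2$ with $j=i+1$, and $i\ge 2$ with $j\ge i+2$, invoking in each case the joint density of the relevant two, three, or four order statistics. The paper's proof is precisely this, with the same case split and the same order-statistic density formulas you describe.
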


Theorems \ref{theorem-0} and \ref{theorem-02} allow us to calculate the first and second moments, and thus in turn the variance and the standard deviation, of the aggregate loss $S(t)$ based on the conditional expectations $\mathbf{E}[ X_i | T_{i-1}, T_{i}]$ and  $\mathbf{E}[ X_i X_j | T_{i-1}, T_{i} ,T_{j-1} , T_{j}]$ as well as on the density $f_t$ and the cdf $F_t$. We shall provide specific calculations of such quantities in the following sections, where we assume that the claim arrival process is the OS-point process with property  $\mathcal{P}$.

\section{Property $\mathcal{P}$ and the mean aggregate loss}
\label{section-3}

Following Feigin (1979), we say that a counting point process  $N(t)$, $t\ge 0$, satisfies the property $\mathcal{P}$ if it is an OS-point process and the cdf $F_t$ is given by formula \eqref{cdf}. Hence, the homogenous Poisson process is a counting process with property $\mathcal{P}$.

In general, Feigin (1979) has proved that a counting point process has property $\mathcal{P}$ if and only if it is a mixed Poisson process. With $L(\lambda )$, $\lambda\ge 0$, denoting the structure cdf of this process, the probabilities $\pi_{t,n}$ -- which we from now on denote by $\pi_{t,n}^L$ -- take on the form
\begin{equation}
\pi_{t,n}^{L}= \int_{0}^{\infty}{(\lambda t)^n \over n!} e^{-\lambda t}dL(\lambda).
\label{mixed-p}
\end{equation}
For example, when the structure cdf is degenerate at a point $\lambda_0>0$, that is, $L(\lambda )= \mathbf{1}\{\lambda_0 \le \lambda \}$, then $\pi_{t,n}^L$, $n\ge 0$, are the usual Poisson probabilities with the parameter $\lambda_0$.

The next theorem is a variant of Theorem \ref{theorem-0} in the case of the mixed Poisson process. To facilitate a straightforward application of the theorem for analyzing models such as the one when each claim size $X_i$ depends only on the preceding inter-claim time $V_i$ (Theorem \ref{theorem-2} below), we formulate our next theorem in terms of $X_i$ dependent on the pair $(T_{i-1},V_i)$. Of course, both pairs $(T_{i-1},V_i)$ and $(T_{i-1},T_i)$ generate same sigma-algebras and thus convey same information about the claim size $X_i$.

\begin{theorem}\label{theorem-1}
Assume that $N(t)$, $t\ge 0$, is the mixed Poisson process with a structure cdf $L(\lambda)$, $\lambda\ge 0$. Let each claim size $X_i$ depend only on the time $T_{i-1}$ of the previous claim and the inter-claim time $V_i=T_i-T_{i-1}$. Then
\begin{multline}
\mathbf{E}[S(t)]
=\sum_{n=1}^{\infty } \pi_{t,n}^{L} \Bigg \{ n \int_{0}^{t}\mathbf{E}\big [ X_1 | T_{1}=y \big ] {(t-y)^{n-1}\over t^n} dy
\\
 + \sum_{i=2}^n {n! \over (i-2)!(n-i)!}\int_{0}^{t} \int_{0}^{t-x}
\mathbf{E}\big [ X_i | T_{i-1}=x, V_{i}=v \big ]
 {x^{i-2}(t-x-v)^{n-i}\over t^n} dv dx\Bigg \} .
\label{ii-6}
\end{multline}
\end{theorem}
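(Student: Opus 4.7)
The plan is to view Theorem \ref{theorem-1} as a direct specialization of Theorem \ref{theorem-0} to the class of OS-point processes with property $\mathcal{P}$, combined with a change of variables that recasts the conditioning on $(T_{i-1},T_i)$ as conditioning on $(T_{i-1},V_i)$.

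First, I would invoke Feigin's characterization stated just above the theorem: since $N(t)$ is a mixed Poisson process with structure cdf $L$, it is an OS-point process having the cdf $F_t(x) = x/t$ from \eqref{cdf}, with corresponding density $f_t(x) = 1/t$ on $[0,t]$, and the probabilities $\pi_{t,n}$ of Theorem \ref{theorem-0} are precisely the $\pi_{t,n}^{L}$ given by \eqref{mixed-p}. Next, I would note that the sigma-algebras generated by $(T_{i-1},T_i)$ and by $(T_{i-1},V_i)$ coincide, so the hypothesis of Theorem \ref{theorem-0} is satisfied and moreover $\mathbf{E}[X_i\mid T_{i-1}=x,T_i=y]=\mathbf{E}[X_i\mid T_{i-1}=x,V_i=y-x]$.

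Plugging $F_t(x)=x/t$ and $f_t(x)=1/t$ into \eqref{ii-4}, the one-dimensional term becomes
\[
n\int_0^t \mathbf{E}[X_1\mid T_1=y]\,\frac{1}{t}\Big(1-\frac{y}{t}\Big)^{n-1}dy = n\int_0^t \mathbf{E}[X_1\mid T_1=y]\,\frac{(t-y)^{n-1}}{t^n}\,dy,
\]
which matches the first piece of \eqref{ii-6}. For the double integrals indexed by $i=2,\dots,n$, substitution yields an integrand proportional to $x^{i-2}(t-y)^{n-i}/t^n$ on the triangle $0\le x\le y\le t$.

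The remaining step, and the only one requiring care, is the change of variable that converts this double integral into the one appearing in \eqref{ii-6}. By Fubini I rewrite the integral over the triangle as $\int_0^t dx\int_x^t dy$, and then substitute $v=y-x$ (with $x$ held fixed), so that $dy=dv$ and the inner limits become $0\le v\le t-x$. Under this substitution, $\mathbf{E}[X_i\mid T_{i-1}=x,T_i=y]$ becomes $\mathbf{E}[X_i\mid T_{i-1}=x,V_i=v]$, the factor $x^{i-2}$ is unaffected, and $(t-y)^{n-i}$ becomes $(t-x-v)^{n-i}$. Gathering terms gives exactly the $i$-th summand of \eqref{ii-6}, and summing over $i=2,\dots,n$ and over $n\ge 1$ against $\pi_{t,n}^{L}$ completes the proof. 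The principal obstacle is purely bookkeeping: correctly exchanging the order of integration over the triangular region and tracking how the powers of $x$, $t-y$ and the density factor $1/t^2$ reassemble into $x^{i-2}(t-x-v)^{n-i}/t^n$.
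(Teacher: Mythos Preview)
Your proposal is correct and follows essentially the same route as the paper's own proof: specialize Theorem \ref{theorem-0} using $F_t(x)=x/t$, $f_t(x)=1/t$, $\pi_{t,n}=\pi_{t,n}^{L}$ via Feigin's characterization, then write the double integral as $\int_0^t\!\int_x^t dy\,dx$ and substitute $v=y-x$ to pass from conditioning on $(T_{i-1},T_i)$ to $(T_{i-1},V_i)$. The paper's argument is identical in content and order, so there is nothing to add.
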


The next theorem is a special case of Theorem \ref{theorem-1} when each claim size $X_i$ depends only on the preceding inter-claim time $V_i$. This has been a popular model in the actuarial literature (e.g., Boudreault et al., 2006, and references therein). In what follows, we use the notation
\[
Q(n|\,v)=\sum_{i=1}^{n} \Delta_i(v) \quad \textrm{with} \quad \Delta_i(v)=\mathbf{E} [ X_i | V_i=v  ].
\]

\begin{theorem}\label{theorem-2}
Assume that $N(t)$, $t\ge 0$, is the mixed Poisson process with a structure cdf $L(\lambda)$, $\lambda\ge 0$. Let each claim size $X_i$ depend only on the inter-claim time $V_i$. Then
\begin{equation}
\mathbf{E}[S(t)]
=\int_{0}^{\infty}  \int_{0}^t \lambda e^{-\lambda v}
\mathbf{E}\big [Q\big (N_{\lambda }(t-v)+1|\,v \big )\big ]\, dv\,dL(\lambda),
\label{th-0aa}
\end{equation}
where $N_{\lambda }$ is the homogenous Poisson process with the constant rate $\lambda >0 $.
\end{theorem}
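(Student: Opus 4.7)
Since $X_i$ is assumed to depend only on the inter-claim time $V_i$, the conditional expectations appearing in Theorem \ref{theorem-1} collapse to $\mathbf{E}[X_1\mid T_1=v]=\Delta_1(v)$ (because $T_1=V_1$) and $\mathbf{E}[X_i\mid T_{i-1}=x,V_i=v]=\Delta_i(v)$, so the $x$-dependence in each summand of \eqref{ii-6} survives only through the power $x^{i-2}$. The plan is therefore to start from \eqref{ii-6} in this simplified form, insert the mixed-Poisson representation $\pi_{t,n}^L=\int_0^\infty \frac{(\lambda t)^n}{n!}e^{-\lambda t}\,dL(\lambda)$ from \eqref{mixed-p}, interchange the summation in $n$ with the integrals (justified by Tonelli, treating $\Delta_i(\cdot)\ge 0$ or by first reducing to the nonnegative case), and then collapse the $n$-series into exponentials. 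The factor $t^n$ in the denominators will cancel cleanly against $(\lambda t)^n$.

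The main computation I would carry out is as follows. For the first summand in \eqref{ii-6}, after inserting \eqref{mixed-p} the $n$-sum produces
\begin{equation*}
\sum_{n=1}^{\infty}\frac{(\lambda t)^n}{n!}e^{-\lambda t}\cdot n\,\frac{(t-v)^{n-1}}{t^n}=\lambda e^{-\lambda t}\sum_{n=1}^{\infty}\frac{(\lambda(t-v))^{n-1}}{(n-1)!}=\lambda e^{-\lambda v},
\end{equation*}
which accounts for the contribution of $\Delta_1(v)$ to the right-hand side of \eqref{th-0aa}. For the second summand in \eqref{ii-6}, interchanging the $i$- and $n$-sums and substituting $m=n-i$ gives, after the usual bookkeeping,
\begin{equation*}
\sum_{i=2}^{\infty}\frac{\lambda^{i}x^{i-2}}{(i-2)!}e^{-\lambda(x+v)}
\end{equation*}
as the coefficient of $\Delta_i(v)$ under the $x,v$ integrals (here I use $\sum_m \frac{(\lambda(t-x-v))^m}{m!}=e^{\lambda(t-x-v)}$). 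Integrating out $x$ over $(0,t-v)$ and factoring $\lambda e^{-\lambda v}$ yields
\begin{equation*}
\lambda e^{-\lambda v}\int_0^{t-v}\frac{\lambda^{i-1}x^{i-2}}{(i-2)!}e^{-\lambda x}\,dx=\lambda e^{-\lambda v}\,\mathbf{P}\bigl[N_\lambda(t-v)\ge i-1\bigr],
\end{equation*}
recognising the Erlang density of the $(i-1)$-th arrival of the rate-$\lambda$ Poisson process.

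To finish, I would assemble the two contributions and use the identity
\begin{equation*}
\sum_{i=1}^{\infty}\Delta_i(v)\,\mathbf{P}\bigl[N_\lambda(t-v)\ge i-1\bigr]=\sum_{k=0}^{\infty}\mathbf{P}[N_\lambda(t-v)=k]\sum_{i=1}^{k+1}\Delta_i(v)=\mathbf{E}\bigl[Q\bigl(N_\lambda(t-v)+1\mid v\bigr)\bigr],
\end{equation*}
obtained by a Fubini interchange on $\{(i,k):1\le i\le k+1\}$. Integrating against $dL(\lambda)$ then produces \eqref{th-0aa}. The main obstacle is purely bookkeeping: correctly reindexing the double sum over $(i,n)$, keeping track of the binomial/factorial coefficients, and justifying the Fubini interchanges; the decisive observation that turns the calculation into the compact form \eqref{th-0aa} is the identification of the inner $n$-series with the Erlang cdf, equivalently the Poisson tail probability $\mathbf{P}[N_\lambda(t-v)\ge i-1]$.
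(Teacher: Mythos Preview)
Your argument is correct. The difference from the paper's proof is purely the order of operations: the paper first swaps the $x$- and $v$-integrals and evaluates the resulting $x$-integral $\int_0^{t-v}x^{i-2}(t-v-x)^{n-i}\,dx$ as a complete beta function, which cancels the combinatorial factor and shows that every $i$-term collapses to the common form $n(t-v)^{n-1}/t^n$; only then are the mixed-Poisson weights inserted and the $n$-series identified with the Poisson pmf at $n-1$. You instead insert the weights first, sum over $n$ to turn $(t-x-v)^{n-i}$ into an exponential, and then evaluate the $x$-integral as an Erlang cdf (equivalently the Poisson tail $\mathbf{P}[N_\lambda(t-v)\ge i-1]$), finishing with a Fubini swap on $(i,k)$. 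The paper's route makes the striking intermediate fact ``all $i$ contribute the same kernel'' explicit and avoids the final double-sum interchange, while your route is more probabilistic and sidesteps the beta identity entirely; both are short and equally rigorous.
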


The following corollary is a simplification of Theorem \ref{theorem-2} when
the conditional random variables $X_i|V_i=v$ have same means. The corollary is  slightly more general than equation (3) of Boudreault et al. (2006), which we shall formulate in a moment.

\begin{corollary}\label{cor-0}
Let the assumptions of Theorem \ref{theorem-2} be satisfied, and let the conditional random variables $X_i|V_i=v$ have same means. Then
\begin{equation}
\mathbf{E}[S(t)]
=\int_{0}^{\infty}  \int_{0}^t \lambda e^{-\lambda v}\mathbf{E}[X_1|V_1=v]
\big ( \lambda(t-v)+1 \big )\, dv\,dL(\lambda).
\label{th-0bb}
\end{equation}
\end{corollary}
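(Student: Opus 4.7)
The plan is to deduce Corollary \ref{cor-0} as a direct specialization of Theorem \ref{theorem-2}, by exploiting the fact that the equal-conditional-means hypothesis turns the function $Q(\cdot \,|\,v)$ into a linear function of its first argument. First I would set $\Delta(v):=\mathbf{E}[X_1|V_1=v]$, so that the hypothesis ``$X_i|V_i=v$ have the same mean'' yields $\Delta_i(v)=\Delta(v)$ for every $i\ge 1$. Consequently the defining sum $Q(n|v)=\sum_{i=1}^{n}\Delta_i(v)$ collapses to
\[
Q(n|v)=n\,\Delta(v),
\]
which is a deterministic linear function of $n$.

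Next, I would substitute the random index $n=N_{\lambda}(t-v)+1$ into this expression and take expectations. Linearity of expectation, together with the elementary identity $\mathbf{E}[N_{\lambda}(s)]=\lambda s$ for the homogeneous Poisson process of rate $\lambda$, then gives
\[
\mathbf{E}\bigl[Q\bigl(N_{\lambda}(t-v)+1\,\big|\,v\bigr)\bigr]=\Delta(v)\,\mathbf{E}\bigl[N_{\lambda}(t-v)+1\bigr]=\Delta(v)\bigl(\lambda(t-v)+1\bigr).
\]

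Finally, I would insert this identity into the double integral appearing on the right-hand side of formula (\ref{th-0aa}) in Theorem \ref{theorem-2}, which immediately yields formula (\ref{th-0bb}) and completes the proof. There is no real obstacle here; the entire argument is a routine algebraic consequence of Theorem \ref{theorem-2} once one recognizes that the equal-means assumption linearizes $Q$ in its first argument, after which linearity of expectation and the Poisson mean formula handle everything else.
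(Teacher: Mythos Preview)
Your proposal is correct and follows essentially the same approach as the paper: both observe that the equal-means assumption gives $Q(n|v)=n\,\mathbf{E}[X_1|V_1=v]$, then use $\mathbf{E}[N_{\lambda}(t-v)+1]=\lambda(t-v)+1$ and substitute into equation~(\ref{th-0aa}).
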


\begin{corollary}[\rm Boudreault et al., 2006, eq.~(3)]\label{cor-1}
Assume that $N(t)$, $t\ge 0$, is a homogenous Poisson process with a constant rate $\lambda>0$. Let each claim size $X_i$ depend only on the inter-claim time $V_i$, and let the conditional random variables $X_i|V_i=v$ have same means. Then
\begin{equation}
\mathbf{E}[S(t)]
=\int_{0}^t \lambda e^{-\lambda v}\mathbf{E}[X_1|V_1=v] \big ( \lambda(t-v)+1 \big ) dv.
\label{eq-12}
\end{equation}
Alternatively, integrating the right-hand side of equation \eqref{eq-12} by parts, we have
\begin{equation}\label{mas}
\mathbf{E}[S(t)] = \lambda t \bigg ( \mathbf{E}[X_1|V_1=0]+\int_{0}^{t}{e^{-\lambda v}\left(1-\frac{v}{t} \right)d\mathbf{E}[X_1|V_1=v]} \bigg ),
\end{equation}
which appears as identity (3) in Boudreault et al. (2006).
\end{corollary}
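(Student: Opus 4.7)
The plan is to derive both identities directly from Corollary~\ref{cor-0}. The first assertion \eqref{eq-12} is simply a specialization: I would take the mixing cdf $L$ to be the point mass $\mathbf{1}\{\lambda\le\mu\}$ at the given rate $\lambda>0$, so that the mixed Poisson process degenerates into the homogeneous Poisson process with that constant rate and the outer integral $\int_{0}^{\infty}\!\cdots\, dL(\mu)$ in \eqref{th-0bb} collapses to evaluation of the integrand at the single atom. This immediately yields \eqref{eq-12}.

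For the second identity \eqref{mas} the plan is a single integration by parts in \eqref{eq-12}, integrating the factor $g(v):=\lambda e^{-\lambda v}\big(\lambda(t-v)+1\big)$ and differentiating the conditional mean $m(v):=\mathbf{E}[X_1|V_1=v]$. The key observation is that $g$ has the surprisingly compact antiderivative $w(v)=-\lambda(t-v)e^{-\lambda v}$: a direct differentiation gives
\[
w'(v)=\lambda e^{-\lambda v}+\lambda^{2}(t-v)e^{-\lambda v}=g(v),
\]
and $w$ vanishes at the upper endpoint $v=t$. Hence the boundary term reduces to $[m(v)w(v)]_{0}^{t}=-m(0)w(0)=\lambda t\,\mathbf{E}[X_1|V_1=0]$, while the residual integral becomes
\[
-\int_{0}^{t}w(v)\,dm(v)=\lambda\int_{0}^{t}(t-v)e^{-\lambda v}\,dm(v)=\lambda t\int_{0}^{t}\Big(1-\tfrac{v}{t}\Big)e^{-\lambda v}\,d\mathbf{E}[X_1|V_1=v].
\]
Adding the two pieces and factoring out $\lambda t$ produces exactly \eqref{mas}.

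The only real obstacle is spotting the clean antiderivative $w(v)=-\lambda(t-v)e^{-\lambda v}$; once that is in hand, both the boundary evaluation (which is zero at $v=t$ thanks to the factor $t-v$) and the algebraic rearrangement of the remaining Stieltjes integral proceed mechanically. A minor caveat worth flagging in the writeup is that the Stieltjes integral in \eqref{mas} is interpreted in the Riemann--Stieltjes sense, which requires mild regularity of $v\mapsto\mathbf{E}[X_1|V_1=v]$ (for instance, local bounded variation) to justify the integration-by-parts step; this is implicit in the statement of the corollary.
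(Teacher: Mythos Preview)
Your proposal is correct and follows essentially the same approach as the paper: the paper's proof derives \eqref{eq-12} from Corollary~\ref{cor-0} by taking the structure cdf $L$ degenerate at $\lambda$, exactly as you do, and simply asserts that \eqref{mas} follows by integration by parts without giving details. Your explicit identification of the antiderivative $w(v)=-\lambda(t-v)e^{-\lambda v}$ fills in that step cleanly and correctly.
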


Boudreault et al. (2006) assume that each claim size $X_i$ depends only on the preceding inter-claim time $V_i$, and that the conditional random variables $X_i|V_i=v$ are identically distributed with their common cdf given by the formula
\begin{equation}\label{eqcd}
\mathbf{P}[X_i\le y|V_i=v]=\left(1-e^{-\beta v} \right)H_{\ell}(y)+ e^{-\beta v}H_s(y),
\end{equation}
where $H_{\ell}$ and $H_s$ are two cdf's, and $\beta \ge 0$ is a parameter. Since larger values of $V_i$ result in smaller $e^{-\beta V_i}$, it is natural to view $H_{\ell}$ as the cdf of larger claims and $H_s$ as the cdf of smaller claims.
Under this dependence model, we have the following corollary with $Y_{\ell}$ and $Y_s$ denoting random variables with the cdf's $H_{\ell}$ and $H_s$, respectively.

\begin{corollary} \label{cor-4}
Assume that $N(t)$, $t\ge 0$, is the mixed Poisson process with a structure cdf $L(\lambda)$, $\lambda\ge 0$. Let each claim size $X_i$ depend only on the inter-claim time $V_i$, and let the distribution of the conditional random variables $X_i|V_i$ be given by formula \eqref{eqcd}. Then
\begin{equation}\label{formula-1}
\mathbf{E}[S(t)] = \mathbf{E}[Y_{\ell}]\int_{0}^{\infty} \big(
\lambda t-\mathfrak{A}(t,\lambda,\beta ) \big) \,dL(\lambda)
+
\mathbf{E}[Y_s]\int_{0}^{\infty} \mathfrak{A}(t,\lambda,\beta ) \,dL(\lambda),
\end{equation}
where
\[
\mathfrak{A}(t,\lambda,\beta )=\frac{\lambda }{(\beta + \lambda )^2}\,\left( \beta  -
      \frac{\beta }{e^{(\beta + \lambda ) t}} +
      (\beta + \lambda) \lambda t \right).
\]
\end{corollary}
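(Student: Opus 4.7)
The plan is to simply specialize Corollary \ref{cor-0} to the distributional assumption \eqref{eqcd}. First I would extract from \eqref{eqcd} the conditional mean
\[
\mathbf{E}[X_1\mid V_1=v]=(1-e^{-\beta v})\mathbf{E}[Y_{\ell}]+e^{-\beta v}\mathbf{E}[Y_{s}]
=\mathbf{E}[Y_{\ell}]+e^{-\beta v}\bigl(\mathbf{E}[Y_{s}]-\mathbf{E}[Y_{\ell}]\bigr),
\]
which is precisely the form needed to feed into \eqref{th-0bb}. Substituting this expression and using Fubini to split the double integral with respect to $dv\,dL(\lambda)$, I obtain the decomposition
\[
\mathbf{E}[S(t)]=\mathbf{E}[Y_{\ell}]\int_{0}^{\infty}I_{1}(t,\lambda)\,dL(\lambda)
+\bigl(\mathbf{E}[Y_{s}]-\mathbf{E}[Y_{\ell}]\bigr)\int_{0}^{\infty}I_{2}(t,\lambda,\beta)\,dL(\lambda),
\]
where $I_{1}(t,\lambda)=\int_{0}^{t}\lambda e^{-\lambda v}(\lambda(t-v)+1)\,dv$ and $I_{2}(t,\lambda,\beta)=\int_{0}^{t}\lambda e^{-(\beta+\lambda)v}(\lambda(t-v)+1)\,dv$.

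Next I would evaluate the two inner integrals by elementary calculus (integration by parts, or the substitution $u=t-v$ followed by the standard identity $\int_{0}^{t}ue^{au}du=(te^{at}/a)-(e^{at}-1)/a^{2}$). A short computation shows that the pleasant cancellation $I_{1}(t,\lambda)=\lambda t$ occurs, while the same technique applied with $a=\beta+\lambda$ gives after simplification
\[
I_{2}(t,\lambda,\beta)=\frac{\lambda}{(\beta+\lambda)^{2}}\Bigl(\beta-\beta e^{-(\beta+\lambda)t}+(\beta+\lambda)\lambda t\Bigr)
=\mathfrak{A}(t,\lambda,\beta).
\]
Finally I would rewrite $\mathbf{E}[Y_{\ell}]\cdot\lambda t +(\mathbf{E}[Y_{s}]-\mathbf{E}[Y_{\ell}])\mathfrak{A}(t,\lambda,\beta)$ as $\mathbf{E}[Y_{\ell}](\lambda t-\mathfrak{A}(t,\lambda,\beta))+\mathbf{E}[Y_{s}]\mathfrak{A}(t,\lambda,\beta)$, integrate against $dL(\lambda)$, and thereby recover \eqref{formula-1}.

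There is no genuine obstacle here, since Corollary \ref{cor-0} already does all the probabilistic work. The only care is in the algebraic simplification of $I_{2}$: one has to combine the two terms $\lambda(1-e^{-(\beta+\lambda)t})/(\beta+\lambda)$ and $-\lambda^{2}(1-e^{-(\beta+\lambda)t})/(\beta+\lambda)^{2}$ over the common denominator $(\beta+\lambda)^{2}$, observe that the $\lambda^{2}$ terms in the numerator cancel, and factor out $\lambda$ to recognize the quantity $\mathfrak{A}(t,\lambda,\beta)$. A final sanity check is the limit $\beta\downarrow 0$: since $\mathfrak{A}(t,\lambda,0)=\lambda t$, formula \eqref{formula-1} reduces to $\mathbf{E}[Y_{s}]\int_{0}^{\infty}\lambda t\,dL(\lambda)=\mathbf{E}[X_{1}]\mathbf{E}[N(t)]$, consistent with the independent mixed Poisson case.
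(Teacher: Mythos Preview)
Your proof is correct and follows essentially the same route as the paper: both start from Corollary~\ref{cor-0}, insert the conditional mean obtained from \eqref{eqcd}, and reduce the problem to evaluating $\int_0^t \lambda e^{-(\beta+\lambda)v}(\lambda(t-v)+1)\,dv=\mathfrak{A}(t,\lambda,\beta)$ together with its $\beta=0$ case $\lambda t$. The only cosmetic difference is that the paper keeps the split $(1-e^{-\beta v})\mathbf{E}[Y_\ell]+e^{-\beta v}\mathbf{E}[Y_s]$ and then observes that the first integral equals $\lambda t-\mathfrak{A}$, whereas you regroup as $\mathbf{E}[Y_\ell]+e^{-\beta v}(\mathbf{E}[Y_s]-\mathbf{E}[Y_\ell])$ and recombine at the end; these are the same computation.
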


Under the conditions of Corollary \ref{cor-4}, the following four statements hold:
\begin{enumerate}
\item
We have that, when $t\to \infty $,
\begin{equation}\label{lim-a}
{\mathfrak{A}(t,\lambda,\beta )\over t }= \frac{\lambda^2 }{\beta  + \lambda } +o(1) .
\end{equation}
A closer look at the remainder term $o(1)$ shows that we can apply Lebesgue's dominated convergence theorem and obtain the limit
\begin{equation}\label{formula-1ba11}
\lim_{t\to \infty }{\mathbf{E}[S(t)] \over t }
=\mathbf{E}[Y_{\ell}] \int_{0}^{\infty}\left(\frac{\beta}{\beta  + \lambda }
 \right)\lambda\,dL(\lambda)
+
\mathbf{E}[Y_s]\int_{0}^{\infty}\left(\frac{\lambda }{\beta  + \lambda } \right)\lambda\,dL(\lambda) .
\end{equation}
\item
If there is only one type of claims, that is, $Y_\ell$ and $Y_s$ have same distributions as a random variable $Y$, then for every $t\ge 0$ and regardless of the value of $\beta $, we have that
\begin{equation}
{\mathbf{E}[S(t)] \over t }
=\mathbf{E}[Y]\int_{0}^{\infty} \lambda \,dL(\lambda).
\label{formula-1ba22}
\end{equation}
Note that the right-hand side of equation (\ref{formula-1ba22}) can be written as $ \mathbf{E}[Y]\mathbf{E}[\Lambda ]$, where $\Lambda $ is a random variable with the cdf $L$.
\item
 If the structure cdf is degenerate at the point $\lambda_0$, that is,  $L(\lambda )=\mathbf{1}\{\lambda_0 \le \lambda \}$, then from equation (\ref{formula-1ba11}) we have that
\begin{equation}\label{formula-1ba33}
\lim_{t\to \infty }{\mathbf{E}[S(t)] \over  t }
= \mathbf{E}[Y_{\ell}]\left(\frac{\beta }{\beta  + \lambda_0 }
 \right)\lambda_0
+
\mathbf{E}[Y_s]\left(\frac{\lambda_0 }{\beta  + \lambda_0 }\right)\lambda_0.
\end{equation}
\item
 If the structure cdf is degenerate at the point $\lambda_0$ and there is only one type of claims, then from equation (\ref{formula-1ba22}) we have that, for every $t\ge 0$,
\begin{equation}\label{formula-1ba44}
{\mathbf{E}[S(t)] \over  t }
= \mathbf{E}[Y]\lambda_0 .
\end{equation}
We can view this result as a `mixed-Poisson analogue' of equation (\ref{mean-var-CPa}).
\end{enumerate}

\section{Property $\mathcal{P}$ and the variability of aggregate losses}
\label{section-4}

In this section we investigate the variability of the aggregate loss $S(t)$ in the case of the counting process with property $\mathcal{P}$, that is, when the process  $N(t)$, $t\ge 0$, is mixed Poisson. We choose the variance of $S(t)$ to measure the variability, and for this, given the results of the previous section, we only need to derive formulas for the second moment $\mathbf{E}[S^2(t)]$.

We start with a reformulation of Theorem \ref{theorem-02} under the assumption that the counting process is mixed Poisson and claim sizes $X_i$ are governed by $T_{i-1}$ and $V_i=T_i-T_{i-1}$, instead of $T_{i-1}$ and $T_{i}$.

\begin{theorem}\label{th-new-12}
Assume that $N(t)$, $t\ge 0$, is the mixed Poisson process with a structure cdf $L(\lambda)$, $\lambda\ge 0$. Let each claim size $X_i$ depend on the claim arrival process only via the time $T_{i-1}$ of the previous claim and the inter-claim time $V_i=T_i-T_{i-1}$, and let the distribution of the product $X_iX_j$ depend on the claim times $T_{i-1}$ and $T_{j-1}$ as well as on the inter-claim times $V_{i}$ and $V_{j}$. Then
\begin{equation}
\mathbf{E}[S^2(t)]=\sum_{n=1}^{\infty }\pi_{t,n}^{L} \big ( A_{t,n}^{L}+2B_{t,n}^{L} \big ) ,
\label{ii-42a}
\end{equation}
where
\begin{multline*}
A_{t,n}^{L}= n \int_{0}^{t}\mathbf{E}\big [ X_1^2 | T_{1}=y \big ] {(t-y)^{n-1}\over t^n} dy
\\
 + \sum_{i=2}^n {n! \over (i-2)!(n-i)!}\int_{0}^{t} \int_{0}^{t-x}
\mathbf{E}\big [ X_i^2 | T_{i-1}=x, V_{i}=v \big ]
 {x^{i-2}(t-x-v)^{n-i}\over t^n} dv dx
\end{multline*}
and
\begin{align}
B_{t,n}^{L}&= n(n-1) \int_{0}^{t} \int_{0}^{t-y} \mathbf{E}\big [ X_1 X_2 | T_{1}=y ,V_{2}=v \big ]
{(t-y-v)^{n-2} \over t^n}dv dy
\notag
\\
&\qquad + \sum_{j=3}^{n} {n! \over (j-3)!(n-j)!} \int_{0}^{t}\int_{y}^{t} \int_{0}^{t-w} \mathbf{E}\big [ X_1 X_j |T_{1}=y ,T_{j-1}=w ,V_{j}=v  \big ]
\notag
\\
& \hspace*{45mm} \times {(w-y)^{j-3}(t-w-v)^{n-j}\over t^n}dvdwdy
\notag
\\
&\qquad + \sum_{i=2}^{n-1}{n! \over (i-2)!(n-i-1)!} \int_{0}^{t}\int_{0}^{t-y} \int_{0}^{t-y-u}\mathbf{E}\big [ X_i X_{i+1} | T_{i-1}=y ,V_{i}=u ,V_{i+1}=v \big ]
\notag
\\
& \hspace*{45mm} \times {y^{i-2}(t-y-u-v)^{n-i-1}\over t^n}dvdudy
\notag
\\
&\qquad + \sum_{i=2}^{n-2}\sum_{j=i+2}^{n}{n! \over (i-2)!(j-i-2)!(n-j)!}
\notag
\\
& \qquad \times \int_{0}^{t}\int_{0}^{t-x} \int_{x+u}^{t}\int_{0}^{t-w}
\mathbf{E}\big [ X_i X_j | T_{i-1}=x, V_{i}=u ,T_{j-1}=w , V_{j}=v \big ]
\notag
\\
& \hspace*{45mm} \times { x^{i-2}(w-x-u)^{j-i-2}(t-w-v)^{n-j} \over t^n}dvdwdudx.
\end{align}
\end{theorem}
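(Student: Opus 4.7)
The plan is to derive the statement as a direct specialization and change of variables in Theorem \ref{theorem-02}. Since a mixed Poisson process with structure cdf $L$ has property $\mathcal{P}$ by the Feigin (1979) characterization recalled in Section \ref{section-3}, it is an OS-point process with $f_t(x)=1/t$ and $F_t(x)=x/t$ on $[0,t]$, and $\pi_{t,n}=\pi_{t,n}^L$. Hence Theorem \ref{theorem-02} applies verbatim and gives $\mathbf{E}[S^2(t)]=\sum_{n\ge 1}\pi_{t,n}^L(A_{t,n}+2B_{t,n})$. The task is therefore to rewrite each of the five groups of integrals appearing in $A_{t,n}$ and $B_{t,n}$ in the new variables $(T_{i-1},V_i)$ rather than $(T_{i-1},T_i)$, and to substitute the explicit forms of $f_t$ and $F_t$.

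The substitutions are straightforward once the conventions are fixed. For the single sum in $A_{t,n}^{L}$, write $T_i=T_{i-1}+V_i$, i.e.\ change the inner variable $y$ to $v=y-x$. The Jacobian is unity, the region $\{0\le x\le y\le t\}$ maps to $\{0\le x\le t,\ 0\le v\le t-x\}$, and the factors $f_t(x)f_t(y)F_t^{i-2}(x)(1-F_t(y))^{n-i}$ collapse to $x^{i-2}(t-x-v)^{n-i}/t^n$. This yields exactly the stated formula for $A_{t,n}^{L}$; the integral involving $T_1$ alone is immediate since $T_1=V_1$. For the four blocks of $B_{t,n}^{L}$ I would carry out the analogous changes one block at a time: in the $X_1X_2$ block substitute $z=y+v$; in the $X_1X_j$ block with $j\ge 3$ substitute $z=w+v$ and keep $y,w$, which produces the iterated region $0\le y\le t$, $y\le w\le t$, $0\le v\le t-w$; in the $X_iX_{i+1}$ block substitute $w=y+u$ and $z=w+v=y+u+v$, giving $0\le y\le t$, $0\le u\le t-y$, $0\le v\le t-y-u$; in the last block substitute $y=x+u$ and $z=w+v$, giving $0\le x\le t$, $0\le u\le t-x$, $x+u\le w\le t$, $0\le v\le t-w$. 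In each case the product of four (or fewer) factors $f_t$ contributes $1/t^k$, and the powers of $F_t$ and $1-F_t$ produce $x^{i-2}$, $(w-x-u)^{j-i-2}$ and $(t-w-v)^{n-j}$ (or the corresponding simpler factors in the other blocks), matching the announced formula after canceling the total power of $t$ to $t^{-n}$.

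The only real obstacle is clerical: the four $B$-blocks differ in the pattern of intervals between the pairs $(T_{i-1},T_i)$ and $(T_{j-1},T_j)$, and one must carefully keep track of which of the four order-statistic gaps contain which factors of $F_t$, in order to see that the powers $F_t^{i-2}(x)$, $(F_t(w)-F_t(y))^{j-i-2}$, $(1-F_t(z))^{n-j}$ transform correctly. Once the limits of integration and the exponents are listed block by block, the substitution becomes purely algebraic and no analytic estimate is required, so the passage from Theorem \ref{theorem-02} to the present theorem is complete. I would present the argument by displaying the generic change of variables once, then tabulating the four $B$-blocks to show that the integrands reduce to the announced expressions, and finally observing that the sum over $n$ with weights $\pi_{t,n}^L$ is preserved throughout.
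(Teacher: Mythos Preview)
Your proposal is correct and matches the paper's approach exactly. The paper does not give a separate proof of this theorem in the appendix; it introduces the result as ``a reformulation of Theorem \ref{theorem-02} under the assumption that the counting process is mixed Poisson and claim sizes $X_i$ are governed by $T_{i-1}$ and $V_i=T_i-T_{i-1}$, instead of $T_{i-1}$ and $T_i$,'' and the explicit proof of the analogous first-moment result (Theorem \ref{theorem-1}) carries out precisely the substitution $F_t(x)=x/t$, $f_t(x)=1/t$ followed by the change of variable $y=x+v$ that you describe.
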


Theorem \ref{th-new-12} is a general result that allows us to tackle various dependence structures between $X_i$ and the pair $(T_{i-1},V_i)$. In particular, in the next theorem we present a formula for the second moment $\mathbf{E}[S^2(t)]$ assuming that each $X_i$ depends only on $V_i$. Naturally, in this special case we get a shorter and more elegant formula than the one in Theorem \ref{th-new-12}. We use the notation
\[
\Theta(n|v)=\sum_{i=1}^n \Theta_i(v) \quad \textrm{with} \quad \Theta_i(v)=\mathbf{E} [ X_i^2 | V_i=v  ]
\]
and
\[
\Upsilon(n|y,v)=\sum_{1\le i \ne j \le n}\Delta_i(y)\Delta_j(v) \quad \textrm{with} \quad \Delta_i(v)=\mathbf{E} [ X_i | V_i=v  ].
\]

\begin{theorem}\label{theorem-12bb}
Assume that $N(t)$, $t\ge 0$, is the mixed Poisson process with a structure cdf $L(\lambda)$, $\lambda\ge 0$. Let each claim size $X_i$ depend only on the inter-claim time $V_i$, and let the conditional variables $X_i|V_i$ be independent. Then
\begin{multline}
\mathbf{E}[S^2(t)]
=\int_{0}^{\infty}  \int_{0}^t \lambda e^{-\lambda v}
\mathbf{E}\big [\Theta \big (N_{\lambda }(t-v)+1|\,v \big )\big ]\, dv\,dL(\lambda)
\\
+ \int_{0}^{\infty}\int_{0}^{t} \int_{0}^{t-y}\lambda^2 e^{-\lambda (y+v)} \mathbf{E}\big [\Upsilon(N_{\lambda }(t-v)+2|\,y,v)\big ]\,dv dy dL(\lambda) .
\label{ii-42ddd}
\end{multline}
\end{theorem}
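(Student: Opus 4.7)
My plan is to derive the identity by specializing Theorem \ref{th-new-12}, whose hypotheses are strictly weaker than those imposed here. Under the present assumptions the conditional expectations appearing in Theorem \ref{th-new-12} collapse to
\[
\mathbf{E}[X_i^2\mid T_{i-1},V_i]=\Theta_i(V_i),
\]
and, by the conditional independence of $X_i\mid V_i$ and $X_j\mid V_j$,
\[
\mathbf{E}[X_iX_j\mid T_{i-1},V_i,T_{j-1},V_j]=\Delta_i(V_i)\,\Delta_j(V_j)\quad(i\ne j).
\]
The crucial consequence is that after these substitutions the integrands in the $B_{t,n}^{L}$ block no longer depend on $T_{i-1}$ or $T_{j-1}$, so those variables can be integrated out in closed form.

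\textbf{Step 1 (the $A$-block).} After the substitution, $A_{t,n}^{L}$ coincides with the inner expression on the right-hand side of \eqref{ii-6} with $\mathbf{E}[X_i\mid\cdot]$ replaced by $\Theta_i(v)$. Summing against $\pi_{t,n}^{L}$ and carrying out exactly the same manipulation that takes Theorem \ref{theorem-1} to Theorem \ref{theorem-2} then produces the first integral in \eqref{ii-42ddd}.

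\textbf{Step 2 (the $B$-block).} The four sub-sums in $B_{t,n}^{L}$ correspond to the mutually exclusive placements of the marked pair $(i,j)$, $1\le i<j\le n$, namely $(1,2)$, $(1,j)$ with $j\ge 3$, $(i,i+1)$ with $i\ge 2$, and $(i,j)$ with $i\ge 2$, $j\ge i+2$. Together they tile the set of ordered pairs $i<j$. After inserting $\Delta_i(V_i)\Delta_j(V_j)$, the ``extraneous'' variables $x=T_{i-1}$ and $w=T_{j-1}$ appear only inside elementary polynomial factors such as $x^{i-2}$, $(w-x-u)^{j-i-2}$, $(t-w-v)^{n-j}$, which I would evaluate via the Beta identity $\int_0^{b}(b-u)^{a-1}u^{c-1}du=B(a,c)\,b^{a+c-1}$. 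Each sub-sum then collapses onto a common integrand in the two surviving variables $y$ and $v$, namely the two marked inter-claim times.

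\textbf{Step 3 (assembly and mixed-Poisson resummation).} The factor $2$ in front of $B_{t,n}^{L}$ converts the sum over $i<j$ into a sum over all ordered pairs $i\ne j$, producing $\Upsilon(n\mid y,v)$ inside the integrand. Weighting by $\pi_{t,n}^{L}$ and expanding by \eqref{mixed-p} exchanges the series in $n$ for a Lebesgue integral in $\lambda$, and the remaining series in $n$ is then recognised, exactly as in the proof of Theorem \ref{theorem-2}, as a Poisson expectation, giving $\mathbf{E}[\Upsilon(N_\lambda(t-v)+2\mid y,v)]$. The factor $\lambda^{2}e^{-\lambda(y+v)}$ emerges from the two ``marked'' exponential inter-claim densities, which completes the second integral in \eqref{ii-42ddd}.

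The main obstacle is Step 2. The four sub-sums carry genuinely different factorial coefficients because they encode distinct positional configurations of the marked pair within the sequence of $n$ arrivals, and a non-trivial piece of bookkeeping is required to verify that, after the Beta integrations, all four coefficients reassemble into the single multinomial $n!/(n-2)!$ needed to form the symmetric pair-sum $\Upsilon(n\mid y,v)$. A useful consistency check, available before the mixed-Poisson step, is that the combined expression must be symmetric under $(i,y)\leftrightarrow(j,v)$, a symmetry that is \emph{not} manifest in any individual sub-sum but is forced by the product structure $\Delta_i(V_i)\Delta_j(V_j)$; a second check is that the specialization $\Delta_i\equiv\Delta$ must reproduce the cross-term obtained by squaring Corollary \ref{cor-0}.
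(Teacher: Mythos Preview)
Your proposal is correct and follows essentially the same route as the paper: specialize Theorem~\ref{th-new-12} so that $A_{t,n}^{L}$ and the four sub-sums in $B_{t,n}^{L}$ involve only $\Theta_i$ and $\Delta_i\Delta_j$, collapse the extraneous $T_{i-1},T_{j-1}$ integrations via the Beta identity so that each sub-sum becomes $n(n-1)\int_0^t\int_0^{t-y}\Delta_i(y)\Delta_j(v)(t-y-v)^{n-2}t^{-n}\,dv\,dy$, recombine the four pieces into a single $\sum_{i<j}$, and then perform the mixed-Poisson resummation exactly as in the passage from Theorem~\ref{theorem-1} to Theorem~\ref{theorem-2}. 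Your observation that the factor $2$ together with the $(y,v)$-symmetry of the simplex $\{y+v\le t\}$ turns $2\sum_{i<j}\Delta_i(y)\Delta_j(v)$ into $\Upsilon(n\mid y,v)=\sum_{i\ne j}\Delta_i(y)\Delta_j(v)$ is precisely the step the paper uses (tacitly) between its display for $B_{t,n}^{L}$ and the appearance of $\Upsilon$ in the final resummation.
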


We next specialize Theorem \ref{theorem-12bb} even further.

\begin{corollary}\label{cor-12bb}
Assume that $N(t)$, $t\ge 0$, is the mixed Poisson process with a structure cdf $L(\lambda)$, $\lambda\ge 0$. Let each claim size $X_i$ depend only on the inter-claim time $V_i$, and let the conditional variables $X_i|V_i=v$ be independent and have same first and second moments. Then we have that
\begin{multline}
\mathbf{E}[S^2(t)]
=\int_{0}^{\infty}  \int_{0}^t \lambda e^{-\lambda v} \Theta_1(v)
\big (\lambda (t-v)+1\big ) dv\,dL(\lambda)
\\
+ \int_{0}^{\infty}\int_{0}^{t} \int_{0}^{t-y}\lambda^2 e^{-\lambda (y+v)}
\Delta_1(y)\Delta_1(v)\left(\big (\lambda (t-y-v)+2 \big )^2-2\right)
dv dy dL(\lambda).
\label{ii-42ddd2}
\end{multline}
\end{corollary}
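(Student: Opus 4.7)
The plan is to specialize Theorem \ref{theorem-12bb} by substituting the additional assumption that the conditional moments $\mathbf{E}[X_i|V_i=v]$ and $\mathbf{E}[X_i^2|V_i=v]$ do not depend on $i$. Writing the common values as $\Delta_1(v)$ and $\Theta_1(v)$, the sums defining the auxiliary functions collapse to closed forms: $\Theta(n|v)=n\Theta_1(v)$, since each of the $n$ summands equals $\Theta_1(v)$, and $\Upsilon(n|y,v)=n(n-1)\Delta_1(y)\Delta_1(v)$, since there are exactly $n(n-1)$ ordered pairs $(i,j)$ with $i\ne j$ and every such pair contributes the same product.

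Next I would compute the two Poisson expectations appearing in Theorem \ref{theorem-12bb}. Using $\mathbf{E}[N_\lambda(s)]=\lambda s$, the first one simplifies by linearity to
\begin{equation*}
\mathbf{E}\big[\Theta(N_\lambda(t-v)+1|v)\big] = \Theta_1(v)\big(\lambda(t-v)+1\big),
\end{equation*}
which inserts directly into the first integral of the corollary. For the cross term I would set $T=t-y-v$ and invoke the Poisson moment identity $\mathbf{E}[N_\lambda(T)^2]=\lambda T+(\lambda T)^2$ to obtain
\begin{equation*}
\mathbf{E}\big[(N_\lambda(T)+2)(N_\lambda(T)+1)\big] = (\lambda T)^2 + 4\lambda T + 2 = (\lambda T + 2)^2 - 2.
\end{equation*}
Combining this with $\Upsilon(n|y,v)=n(n-1)\Delta_1(y)\Delta_1(v)$ inside the second integral of Theorem \ref{theorem-12bb} reproduces the second integrand of the corollary.

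Because the argument is essentially a substitution of closed-form simplifications into the general formula already supplied by Theorem \ref{theorem-12bb}, I do not anticipate a substantial obstacle. The only mildly non-routine step is recognizing the factorization $(\lambda T)^2+4\lambda T+2=(\lambda T+2)^2-2$, which is purely cosmetic but is needed to put the answer into the exact form stated in the corollary.
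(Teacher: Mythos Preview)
Your proposal is correct and follows essentially the same route as the paper: both reduce $\Theta(n|v)$ to $n\Theta_1(v)$ and $\Upsilon(n|y,v)$ to $n(n-1)\Delta_1(y)\Delta_1(v)$, then evaluate the resulting Poisson expectations $\mathbf{E}[N_\lambda(s)+1]=\lambda s+1$ and $\mathbf{E}[(N_\lambda(T)+2)(N_\lambda(T)+1)]=(\lambda T+2)^2-2$ before substituting into Theorem~\ref{theorem-12bb}. Your explicit verification of the factorial-moment identity via $\mathbf{E}[N_\lambda(T)^2]=\lambda T+(\lambda T)^2$ is a small but welcome addition that the paper leaves implicit.
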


Corollary \ref{cor-12bb} connects our earlier results concerning $\mathbf{E}[S^2(t)]$ with the dependence model specified by formula \eqref{eqcd}. This makes the contents of the next corollary.

\begin{corollary} \label{cor-4g}
Assume that $N(t)$, $t\ge 0$, is the mixed Poisson process with a structure cdf $L(\lambda)$, $\lambda\ge 0$. Let each claim size $X_i$ depend only on the inter-claim time $V_i$, and let the distribution of the conditional variables $X_i|V_i$ be given by formula \eqref{eqcd}. Then with $Y_{\ell}$ and $Y_s$ denoting random variables with cdfs $H_{\ell}$ and $H_s$, respectively, we have that
\begin{align}
\mathbf{E}[S^2(t)]
&= \mathbf{E}[Y_{\ell}^2]\int_{0}^{\infty} \big(
\lambda t-\mathfrak{A}(t,\lambda,\beta ) \big) \,dL(\lambda)
 +
\mathbf{E}[Y_s^2]\int_{0}^{\infty} \mathfrak{A}(t,\lambda,\beta ) \,dL(\lambda)
\notag
\\
& \qquad  + \big (\mathbf{E}[Y_{\ell}]\big )^2 \int_{0}^{\infty}
\Big ( \mathfrak{B}(t,\lambda,0,0 )-2\mathfrak{B}(t,\lambda,0,\beta )+\mathfrak{B}(t,\lambda,\beta,\beta ) \Big ) dL(\lambda)
\notag
\\
&  \qquad + 2 \mathbf{E}[Y_{\ell}]\mathbf{E}[Y_s]\int_{0}^{\infty} \Big ( \mathfrak{B}(t,\lambda,0,\beta )-\mathfrak{B}(t,\lambda,\beta,\beta ) \Big ) dL(\lambda)
\notag
\\
&  \qquad + \big ( \mathbf{E}[Y_s]\big )^2 \int_{0}^{\infty} \mathfrak{B}(t,\lambda,\beta,\beta )  dL(\lambda),
\label{ii-42ddd4}
\end{align}
where $\mathfrak{A}(t,\lambda,\beta )$ is defined in Corollary \ref{cor-4}, and the $\mathfrak{B}$-quantities are as follows:
\begin{equation}
\mathfrak{B}(t,\lambda,\beta,\beta )
={ 2\beta \lambda^2(\beta-2\lambda ) \over (\beta + \lambda )^4}
+t { 4\beta \lambda^3 \over (\beta + \lambda )^3}
+t^2 { \lambda^4 \over (\beta + \lambda )^2}
-\frac{2\,\beta {\lambda }^2\left( -2\,\lambda  +
           \beta \,\left( 1 +
              t\,(\beta + \lambda )  \right)  \right)
         }{(\beta + \lambda )^4 e^{t\,(\beta + \lambda ) }}
\label{f-1}
\end{equation}
and
\begin{equation}
\mathfrak{B}(t,\lambda,0,\beta )
= -\frac{ 2 \beta\lambda ^2 }{(\beta + \lambda )^3}
+ t\frac{ 2\,\beta {\lambda }^2}{(\beta + \lambda )^2}
+ t^2\frac{ \lambda ^3  }{\beta + \lambda }
+\frac{ 2 \beta\lambda ^2 }{(\beta + \lambda )^3e^{t\,(\beta +\lambda )}} ,
\label{f-3}
\end{equation}
with both formulas (\ref{f-1}) and (\ref{f-3}) in the case $\beta = 0 $ reducing to
\begin{equation}
\mathfrak{B}(t,\lambda,0,0 )=  t^2\,{\lambda }^2 .
\label{f-2}
\end{equation}
\end{corollary}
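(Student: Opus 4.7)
The plan is to specialize Corollary \ref{cor-12bb} to the Boudreault conditional law \eqref{eqcd}, for which
\[
\Delta_1(v) = (1-e^{-\beta v})\mathbf{E}[Y_\ell]+e^{-\beta v}\mathbf{E}[Y_s], \qquad \Theta_1(v) = (1-e^{-\beta v})\mathbf{E}[Y_\ell^2]+e^{-\beta v}\mathbf{E}[Y_s^2].
\]
After this substitution the whole argument is bookkeeping: every integral reduces to one of a small family of auxiliary integrals that can be computed in closed form.

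For the first line of \eqref{ii-42ddd2}, I would write $\Theta_1$ as a convex combination and pull $\mathbf{E}[Y_\ell^2]$ and $\mathbf{E}[Y_s^2]$ outside. The inner $v$-integral $\int_0^t \lambda e^{-(\lambda+c)v}(\lambda(t-v)+1)\,dv$ is precisely $\mathfrak{A}(t,\lambda,c)$ in the notation of Corollary \ref{cor-4}; evaluating it at $c=0$ gives $\lambda t$ and at $c=\beta$ gives $\mathfrak{A}(t,\lambda,\beta)$. This reproduces the first line of \eqref{ii-42ddd4}, the coefficient of $\mathbf{E}[Y_\ell^2]$ coming out as $\int (\lambda t - \mathfrak{A}(t,\lambda,\beta))\,dL(\lambda)$.

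For the second line, I would introduce, for $a,b\ge 0$,
\[
\mathfrak{B}(t,\lambda,a,b) = \int_0^t\!\int_0^{t-y}\lambda^2 e^{-(\lambda+a)y-(\lambda+b)v}\bigl[(\lambda(t-y-v)+2)^2 - 2\bigr]\,dv\,dy,
\]
and expand $\Delta_1(y)\Delta_1(v)$ as a linear combination of $1$, $e^{-\beta y}$, $e^{-\beta v}$, $e^{-\beta(y+v)}$. Collecting terms by the coefficients $(\mathbf{E}[Y_\ell])^2$, $\mathbf{E}[Y_\ell]\mathbf{E}[Y_s]$, $(\mathbf{E}[Y_s])^2$, and using the symmetry $\mathfrak{B}(t,\lambda,a,b)=\mathfrak{B}(t,\lambda,b,a)$ (obvious from swapping the variables of integration), gives exactly the three $\mathfrak{B}$-combinations displayed in \eqref{ii-42ddd4}; only the values at $(0,0)$, $(0,\beta)$ and $(\beta,\beta)$ are needed.

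The main obstacle, and the only step involving genuine calculation, is the closed-form evaluation of these three $\mathfrak{B}$'s. I would carry it out via the substitution $s=y+v$ followed by Fubini, which yields
\[
\mathfrak{B}(t,\lambda,a,b) = \int_0^t \lambda^2 e^{-(\lambda+b)s}\bigl[(\lambda(t-s)+2)^2-2\bigr]\int_0^s e^{(b-a)y}\,dy\,ds.
\]
When $a=b$ the inner integral is $s$; expanding the bracket then reduces the outer integral to elementary ones of the form $\int_0^t s^k e^{-(\lambda+a)s}\,ds$ for $k=1,2,3$, producing \eqref{f-1} (and \eqref{f-2} as the special case $a=0$, which also serves as a sanity check on the algebra). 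When $(a,b)=(0,\beta)$ the inner integral is $(e^{\beta s}-1)/\beta$, splitting the outer integral into two copies of the same elementary computation at rates $\lambda$ and $\lambda+\beta$ and yielding \eqref{f-3}. A final Taylor expansion at $\beta=0$ verifies that both \eqref{f-1} and \eqref{f-3} collapse to \eqref{f-2}, which closes the argument.
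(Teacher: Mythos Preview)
Your proposal is correct and follows essentially the same route as the paper: start from Corollary~\ref{cor-12bb}, plug in the Boudreault conditional moments, identify the $\Theta_1$-integral with $\mathfrak{A}$, introduce the symmetric double integral $\mathfrak{B}(t,\lambda,a,b)$, and expand $\Delta_1(y)\Delta_1(v)$ into the four exponential terms to obtain the displayed $\mathfrak{B}$-combinations. The only difference is that where the paper simply says ``after a somewhat tedious checking of formulas (\ref{f-1}) and (\ref{f-3})'', you supply an explicit device---the substitution $s=y+v$ followed by Fubini---to reduce each $\mathfrak{B}$ to a one-dimensional integral; this is a genuine improvement in transparency but not a different strategy.
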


We next analyze the result of Corollary \ref{cor-4g} in detail. First, we recall the asymptotic formula (\ref{lim-a}). Then we check that, when $t\to \infty $,
\begin{equation}
{\mathfrak{B}(t,\lambda,\beta,\beta )\over t}
=
\frac{4\,\beta \lambda ^3  }{(\beta + \lambda )^3}
    + t \,\frac{\lambda^4 }{(\beta + \lambda )^2} +o(1)
\label{f-1ii}
\end{equation}
and
\begin{equation}
{\mathfrak{B}(t,\lambda,0,\beta )\over t}
=
\frac{2\,\beta \lambda ^2  }{(\beta + \lambda )^2}
    + t \,\frac{\lambda^3 }{\beta + \lambda}+o(1).
\label{f-3ii}
\end{equation}
Of course, equation (\ref{f-2}) gives
\begin{equation}
{\mathfrak{B}(t,\lambda,0,0 ) \over t}=  t\,{\lambda }^2 .
\label{f-2ii}
\end{equation}
A closer look at the two remainder terms $o(1)$ in equations (\ref{f-1ii}) and (\ref{f-3ii}) shows that we can apply Lebesgue's dominated convergence theorem and then,  using Corollary \ref{cor-4g}, we obtain the limit
\begin{align}
\lim_{t\to \infty }{\mathbf{E}[S^2(t)] \over t }
&= \mathbf{E}[Y_{\ell}^2] \int_{0}^{\infty}\left(\frac{\beta}{\beta  + \lambda }
 \right)\lambda\,dL(\lambda)
+
\mathbf{E}[Y_s^2]\int_{0}^{\infty}\left(\frac{\lambda }{\beta  + \lambda } \right)\lambda\,dL(\lambda)
\notag
\\
& \qquad  + \big (\mathbf{E}[Y_{\ell}]\big )^2 \int_{0}^{\infty}
\bigg ( -\frac{4\,\beta^2 \lambda ^2  }{(\beta + \lambda )^3}
    +t  \frac{\beta^2 \lambda ^2  }{(\beta + \lambda )^2}
    \bigg ) dL(\lambda)
\notag
\\
&  \qquad + \mathbf{E}[Y_{\ell}]\mathbf{E}[Y_s]\int_{0}^{\infty}
\bigg ( \frac{4\,\beta \lambda ^2(\beta- \lambda )}{(\beta + \lambda )^3}
    +t  \frac{2 \beta \lambda ^3  }{(\beta + \lambda )^2}
    \bigg )  dL(\lambda)
\notag
\\
&  \qquad + \big ( \mathbf{E}[Y_s]\big )^2 \int_{0}^{\infty}
\bigg ( \frac{4\,\beta \lambda ^3  }{(\beta + \lambda )^3}
    + t \,\frac{\lambda^4 }{(\beta + \lambda )^2}
     \bigg ) dL(\lambda).
\label{new-1}
\end{align}

It follows that, under the conditions of Corollary \ref{cor-4g}, the following four statements hold:
\begin{enumerate}
\item
We have that
\begin{equation}
\lim_{t\to \infty }{\mathbf{E}[S^2(t)] \over t^2 }
=  \int_{0}^{\infty}\bigg ( \mathbf{E}[Y_{\ell}]{\beta \over \beta + \lambda  }+\mathbf{E}[Y_s] { \lambda \over \beta + \lambda  }\bigg )^2\lambda^2 dL(\lambda).
\label{new-2}
\end{equation}
\item
If there is only one type of claims, that is, $Y_\ell$ and $Y_s$ are distributed as a random variable $Y$, then \
\begin{equation}
\lim_{t\to \infty }{\mathbf{E}[S^2(t)] \over t^2 }
=  (\mathbf{E}[Y])^2\int_{0}^{\infty} \lambda^2 dL(\lambda).
\label{new-3}
\end{equation}
Note that the right-hand side of equation (\ref{formula-1ba22}) can be rewritten as $ (\mathbf{E}[Y])^2\mathbf{E}[\Lambda^2 ]$, where $\Lambda $ is a random variable with the cdf $L(\lambda )$.
\item
Combining statements (\ref{formula-1ba22}) and (\ref{new-1}), we have that
\begin{equation}
\lim_{t\to \infty }{\mathbf{Var}[S(t)] \over t^2 }
=  (\mathbf{E}[Y])^2\mathbf{Var}[\Lambda ].
\label{new-4}
\end{equation}
Note that when the structure cdf $L(\lambda )$ is degenerate at a point $\lambda_0>0$, then $\mathbf{Var}[\Lambda ]=0$ and so statement (\ref{new-4}) implies that $\mathbf{Var}[S(t)] /t^2\to 0$ when $t\to \infty $.
\item
Statements (\ref{formula-1ba22}) and (\ref{new-1}) help us to establish an asymptotic formula for the variance $\mathbf{Var}[S(t)]$ in the case $\mathbf{Var}[\Lambda ]=0$. Namely, we have that
\begin{equation}
\lim_{t\to \infty }{\mathbf{Var}[S(t)]\over t }=(\mathbf{E}[Y])^2\mathbf{E}[\Lambda ].
\label{new-5}
\end{equation}
We can view this result as a `mixed-Poisson analogue' of equation  (\ref{mean-var-CPb}).
\end{enumerate}

\section{Conclusions}
\label{section-5}

We have demonstrated that the order statistic point process -- which is a remarkably general process -- provides a tractable model for insurance claim arrivals and enables us to calculate various quantities of interest. More complex quantities than the herein tackled first and second moments of aggregate claims can be studied using  techniques of the present paper, which can further be extended and generalized to incorporate even more general dependence structures between claim sizes and their (inter-)arrival times. In summary, we believe that the herein suggested methodology opens up a fruitful direction for research in Ruin Theory and beyond.



\section*{References}
\def\hang{\hangindent=\parindent\noindent}

\hang
Abramowitz, M. and Stegun, I.A., (1972).
\textit{Handbook of Mathematical Functions with Formulas, Graphs, and Mathematical Tables.} Dover, New York.

\hang
Berg, M. and Spizzichino, F.  (2000).
Time-lagged point processes with the order-statistics property.
\textit{Mathematical Methods of Operations Research} 51, 301--314.

\hang
Boudreault, M., Cossette, H., Landriault, D., and Marceau, E. (2006). On a risk model with dependence between inter-claim arrivals and claim sizes.
\textit{Scandinavian Actuarial Journal} 5, 256-285.

\hang
Crump, K.S. (1975).
On point processes having an order statistic structure.
\textit{Sankhy\={a}, Series A}, 37, 396--404.

\hang
David, H.A. and Nagaraja, H.N. (2003). \textit{Order Statistics}. (Third edition.) Wiley, New Jersey.

\hang
Debrabant, B. (2008). \textit{Point Processes with a Generalized Order Statistic Property.} Logos, Berlin.

\hang
Feigin, P.D.  (1979).
On the characterization of point processes with the order statistic property.
\textit{Journal of Applied Probability} 16, 297--304.

\hang
Garrido, J.  and Lu, Y. (2004). On double periodic non-homogeneous Poisson processes.
{\it Bulletin of the Association of Swiss Actuaries} 2, 195--212.

\hang
Huang, W.J. and Shoung, J.M.  (1994).
On a study of some properties of point processes. \textit{Sankhy\={a}, Series A}, 56, 67--76,

\hang
Klugman, S.A. Panjer, H.H., and Willmot, G.E. (2008).
\textit{Loss Models: From Data to Decisions}. (3rd edition.) Wiley, New Jersey.

\hang
L\'eveill\'e, G., Garrido, J., and Wang, Y.F. (2010).
Moment generating functions of compound renewal sums with discounted claims.
\textit{Scandinavian Actuarial Journal} 3, 165--184.

\hang
Li, S. (2008).
Discussion on ``On the Laplace transform of the aggregate discounted claims with Markovian arrivals.''
\textit{North American Actuarial Journal} 4, 443--445.

\hang
Lu, Y. and Garrido, J. (2005).
Doubly periodic non-homogeneous Poisson models for hurricane data.
\textit{Statistical Methodology} 2, 17--35.

\hang
Lu, Y. and Garrido, J. (2006).
Regime-switching periodic non-homogeneous Poisson processes.
\textit{North American Actuarial Journal} 10 (4), 235--248.

\hang
Ren, J. (2008).
On the Laplace transform of the aggregate discounted claims with Markovian arrivals.
\textit{North American Actuarial Journal} 2, 198--207.

\hang
Rolski, T., Schmidli, H., Schmidt, V., and Teugels, J. (1999).
\textit{Stochastic Processes for Insurance and Finance}. Wiley, Chichester.

\hang
Teugels, J.L. and Vynckier, P.  (1996).
The structure distribution in a mixed Poisson process. \textit{Journal of Applied Mathematics and Stochastic Analysis} 9, 489--496.

\appendix

\section{Appendix: Proofs}
\label{app}

\def\theequation{A.\arabic{equation}}

\begin{proof}[Proof of Theorem \ref{theorem-0}]
Using repeated conditioning, we obtain that
\begin{align}
\mathbf{E}[S(t)]
&= \sum_{n=1}^{\infty } \pi_{t,n}\bigg \{\sum_{i=1}^n \mathbf{E}\big [ X_i | N(t)=n \big ]\bigg \}
\notag
\\
&=\sum_{n=1}^{\infty } \pi_{t,n} \bigg \{\int_{0}^{t}\mathbf{E}\big [ X_1 | T_{1}=y , N(t)=n \big ]dF_{1|t,n}(y)
\notag
\\
& \qquad + \sum_{i=2}^n \iint_{0\le x\le y\le t }
\mathbf{E}\big [ X_i | T_{i-1}=x, T_{i}=y , N(t)=n \big ] dF_{i-1,i|t,n}(x, y)\bigg \},
\label{ii-1}
\end{align}
where
\begin{equation}
F_{i|t,n}(x)=\mathbf{P}[T_i\le v|N(t)=n],\quad i\ge 1,
\label{cdf-i}
\end{equation}
and
\begin{equation}
F_{i,j|t,n}(x,y)=\mathbf{P}[T_{i}\le x, T_{j}\le y|N(t)=n], \quad 1\le i\le j.
\label{cdf-ij}
\end{equation}
Conditionally on $N(t)=n$, the random variables $T_1,\dots , T_n$ can be viewed as the order statistics $\tau_{1:n}\le \dots \le \tau_{n:n}$ of i.i.d.\, random variables $\tau_1,\dots , \tau_n$ with the common cdf $F_t$ given by formula \eqref{cdf-main}. Hence, $F_{i|t,n}(x)=\mathbf{P}[\tau_{i:n}\le x]$ and
$F_{i,j|t,n}(x,y)=\mathbf{P}[\tau_{i:n}\le x, \tau_{j:n}\le y]$.
Consequently (see, e.g., David and Nagaraja, 2003), the density of $F_{1|t,n}$ is equal to
\begin{equation}
f_{1|t,n}(x)=nf_t(x)(1-F_t(x))^{n-1}
\label{ii-2}
\end{equation}
for all $x\in [0,t]$ and vanishes for all other $x$. For $2\le i \le n $, the density of $F_{i-1,i|t,n}$ is
\begin{equation}
f_{i-1,i|t,n}(x,y)={n! \over (i-2)!(n-i)!} f_t(x)f_t(y)F_t^{i-2}(x)(1-F_t(y))^{n-i}
\label{ii-3}
\end{equation}
for all $(x,y)$ such that $0\le x \le y \le t$ and vanishes for all other $(x,y)$. Plugging expressions (\ref{ii-2}) and (\ref{ii-3}) into equation \eqref{ii-1} gives (\ref{ii-4}) and concludes the proof of Theorem \ref{theorem-0}.
\end{proof}

\begin{proof}[Proof of Theorem \ref{theorem-02}]
Conditioning yields
\begin{equation}
\mathbf{E}[S^2(t)]
= \sum_{n=1}^{\infty } \pi_{t,n} \bigg \{ \sum_{i=1}^n \mathbf{E}\big [ X_i^2 | N(t)=n \big ]\bigg \} + 2 \sum_{n=1}^{\infty } \pi_{t,n} \bigg \{ \sum_{1\le i < j \le n}\mathbf{E}\big [ X_i X_j | N(t)=n \big ] \bigg \}.
\label{eq-10}
\end{equation}
We next calculate the two sums in the curly brackets on the right-hand side of equation (\ref{eq-10}). Starting with the first sum and proceeding just like in the proof of Theorem \ref{theorem-0} but now with $X_i^2$ instead of $X_i$, we obtain
\begin{align}
\sum_{i=1}^n \mathbf{E}\big [ X_i^2 | N(t)=n \big ]
&=\int_{0}^{t}\mathbf{E}\big [ X_1^2 | T_{1}=y , N(t)=n \big ]dF_{1|t,n}(y)
\notag
\\
& \qquad + \sum_{i=2}^n \int_{0}^{t}\int_{0}^{y}
\mathbf{E}\big [ X_i^2 | T_{i-1}=x, T_{i}=y , N(t)=n \big ] dF_{i-1,i|t,n}(x, y)
\notag
\\
&=n \int_{0}^{t}\mathbf{E}\big [ X_1^2 | T_{1}=y \big ]  f_t(y)(1-F_t(y))^{n-1}dy
\notag
\\
&  \qquad   + \sum_{i=2}^n {n! \over (i-2)!(n-i)!}\int_{0}^{t}\int_{0}^{y}
\mathbf{E}\big [  X_i^2 | T_{i-1}=x, T_{i}=y \big ]
\notag
\\
& \hspace*{3cm} \times f_t(x)f_t(y)F_t^{i-2}(x)(1-F_t(y))^{n-i}dxd y,
\label{eq-11}
\end{align}
where the right-most equation follows from equations \eqref{ii-2} and \eqref{ii-3}. As to the sum in the second curly brackets on the right-hand side of equation (\ref{eq-10}), we decompose it as follows
\begin{align}
\sum_{1\le i < j \le n}\mathbf{E}\big [ X_i X_j | N(t)=n \big ]
& =\mathbf{E}\big [ X_1 X_2 | N(t)=n \big ]
+\sum_{j=3}^{n}\mathbf{E}\big [ X_1 X_j | N(t)=n \big ]
\notag
\\
&\quad + \sum_{i=2}^{n-1}\mathbf{E}\big [ X_i X_{i+1} | N(t)=n \big ]
+ \sum_{i=2}^{n-2}\sum_{j=i+2}^{n}\mathbf{E}\big [ X_i X_j | N(t)=n \big ]
\label{eq-12v}
\end{align}
and then investigate the resulting four summands separately. We begin with the expectation
\begin{align}
\mathbf{E}\big [ X_1 X_2 | N(t)=n \big ]
&= \iint\mathbf{E}\big [ X_1 X_2 | T_{1}=y ,T_{2}=z \big ]dF_{1,2|t,n}(y,z)
\notag
\\
&= n(n-1) \int_{0}^{t}\int_{0}^{z} \mathbf{E}\big [ X_1 X_2 | T_{1}=y ,T_{2}=z \big ]
\notag
\\
& \hspace*{4cm} \times f_t(y)f_t(z)(1-F_t(z))^{n-2}dy dz,
\label{eq-13}
\end{align}
where the right-most equality follows from formula (\ref{ii-3}) with $i=2$. Next, we calculate the second sum on the right-hand side of equation (\ref{eq-12v}) and have that
\begin{align}
\sum_{j=3}^{n}& \mathbf{E}\big [ X_1 X_j | N(t)=n \big ]
\notag
\\
&=\sum_{j=3}^{n} \iiint\mathbf{E}\big [ X_1 X_j |T_{1}=y ,T_{j-1}=w ,T_{j}=z  \big ]
dF_{1,j-1,j|t,n}(y,w,z)
\notag
\\
&=\sum_{j=3}^{n} {n! \over (j-3)!(n-j)!} \int_{0}^{t}\int_{0}^{z}\int_{0}^{w}\mathbf{E}\big [ X_1 X_j |T_{1}=y ,T_{j-1}=w ,T_{j}=z  \big ]
\notag
\\
& \hspace*{2cm} \times f_t(y)f_t(w)f_t(z)(F_t(w)-F_t(y))^{j-3}(1-F_t(z))^{n-j}dydwdz,
\label{eq-14}
\end{align}
where we have used the fact that, for $3\le j \le n$, the density of $F_{1,j-1,j|t,n}$ is equal to (see, e.g., David and Nagaraja, 2003)
\begin{equation}
f_{1,j-1,j|t,n}(y,w,z)={n! \over (j-3)!(n-j)!} f_t(y)f_t(w)f_t(z)(F_t(w)-F_t(y))^{j-3}(1-F_t(z))^{n-j}
\label{ii-22}
\end{equation}
when $0\le y \le w \le z \le t$ and vanishes for all other $(y,w,z)$.
As to the third sum on the right-hand side of equation (\ref{eq-12v}), we have that
\begin{align}
\sum_{i=2}^{n-1}&\mathbf{E}\big [ X_i X_{i+1} | N(t)=n \big ]
\notag
\\
&=\sum_{i=2}^{n-1}\iiint\mathbf{E}\big [X_i X_{i+1} |T_{i-1}=y,T_{i}=w,T_{i+1}=z \big ]
dF_{i-1,i,i+1|t,n}(y,w,z)
\notag
\\
&=\sum_{i=2}^{n-1}{n! \over (i-2)!(n-i-1)!} \int_{0}^{t}\int_{0}^{z}\int_{0}^{w}\mathbf{E}\big [ X_i X_{i+1} | T_{i-1}=y ,T_{i}=w ,T_{i+1}=z \big ]
\notag
\\
& \hspace*{4cm} \times f_t(y)f_t(w)f_t(z)F_t(y)^{i-2}(1-F_t(z))^{n-i-1}dydwdz,
\label{eq-15}
\end{align}
where the latter follows from the fact that, for $2\le i \le n$, the density corresponding to $F_{i-1,i,i+1|t,n}$ is equal to (see, e.g., David and Nagaraja, 2003)
\begin{equation}
f_{i-1,i,i+1|t,n}(y,w,z)={n! \over (i-2)!(n-i-1)!} f_t(y)f_t(w)f_t(z)F_t(y)^{i-2}(1-F_t(z))^{n-i-1}
\label{ii-22v}
\end{equation}
when $0\le y \le w \le z \le t$ and vanishes for all other $(y,w,z)$.
Finally, we compute the fourth sum on the right-hand side of equation (\ref{eq-12v}) and have that
\begin{align}
&\sum_{i=2}^{n-2}\sum_{j=i+2}^{n}\mathbf{E}\big [ X_i X_j | N(t)=n \big ]
\notag
\\
&= \sum_{i=2}^{n-2}\sum_{j=i+2}^{n}\iiiint
\mathbf{E}\big [ X_i X_j | T_{i-1}=x, T_{i}=y ,T_{j-1}=w , T_{j}=z \big ]
dF_{i-1,i,j-1,j|t,n}(x, y,w,z)
\notag
\\
&= \sum_{i=2}^{n-2}\sum_{j=i+2}^{n}{n! \over (i-2)!(j-i-2)!(n-j)!}
\notag
\\
& \quad \times \int_{0}^{t}\int_{0}^{z}\int_{0}^{w}\int_{0}^{y}
\mathbf{E}\big [ X_i X_j | T_{i-1}=x, T_{i}=y ,T_{j-1}=w , T_{j}=z \big ]
\notag
\\
& \hspace*{1cm} \times f_t(x)f_t(y)f_t(w)f_t(z) F_t^{i-2}(x)(F_t(w)-F_t(y))^{j-i-2}(1-F_t(z))^{n-j} dxdydwdz,
\label{eq-16}
\end{align}
where the right-most equation holds because, for $2\le i <j \le n $ with $j-i\ge 2$, the density of $F_{i-1,i,j-1,j|t,n}$ is equal to (see, e.g., David and Nagaraja, 2003)
\begin{multline}
f_{i-1,i,j-1,j|t,n}(x, y,w,z)={n! \over (i-2)!(j-i-2)!(n-j)!} f_t(x)f_t(y)f_t(w)f_t(z)
\\
\times F_t^{i-2}(x)(F_t(w)-F_t(y))^{j-i-2}(1-F_t(z))^{n-j}
\label{ii-32v}
\end{multline}
within the region $0\le x \le y \le w \le z  \le t$ and vanishes for all other $(x, y,w,z)$.

With the above formulas, we arrive at equation (\ref{ii-42}) and finish the proof of Theorem \ref{theorem-02}.
\end{proof}

\begin{proof}[Proof of Theorem \ref{theorem-2}]
By assumption, each claim size $X_i$ depends only on the preceding inter-claim time $V_i=T_{i}-T_{i-1}$ (by definition, $T_0=0$) and thus the expectation $\mathbf{E} [ X_i | T_{i-1}=x, V_{i}=v]$ reduces to $\mathbf{E} [ X_i | V_i=v  ]$, which is $\Delta_i(v)$. Hence, equation \eqref{ii-6} becomes
\begin{multline}
\mathbf{E}[S(t)]
=\sum_{n=1}^{\infty } \pi_{t,n}^{L} \Bigg \{ \int_{0}^{t}\Delta_1(y)\bigg ( n{(t-y)^{n-1}\over t^n} \bigg )dy
\\
+ \sum_{i=2}^n \int_{0}^{t} \int_{0}^{t-x}\Delta_i(v)
\bigg ({n! \over (i-2)!(n-i)!} {x^{i-2}(t-x-v)^{n-i}\over t^n}\bigg )d vdx\Bigg \} .
\label{ii-7}
\end{multline}
Interchanging the order of integration on the right-hand side of equation (\ref{ii-7}) and then noticing that the resulting inner integral is the complete beta function, we obtain
\begin{align}
\mathbf{E}[S(t)]
&=\sum_{n=1}^{\infty } \pi_{t,n}^{L} \Bigg \{ \int_{0}^{t}\Delta_1(y)\bigg ( n{(t-y)^{n-1}\over t^n} \bigg )dy
\notag
\\
& \quad + \sum_{i=2}^n \int_{0}^{t} \Delta_i(v)\int_{0}^{t-v}
\bigg ({n! \over (i-2)!(n-i)!} {x^{i-2}(t-v-x)^{n-i}\over t^n}\bigg )dxd v\Bigg \}
\notag
\\
&=\sum_{n=1}^{\infty } \pi_{t,n}^{L} \Bigg \{ \int_{0}^{t}\Delta_1(y)\bigg ( n{(t-y)^{n-1}\over t^n} \bigg )dy
+ \sum_{i=2}^n \int_{0}^{t} \Delta_i(v)\bigg ( n{(t-v)^{n-1}\over t^n} \bigg )d v\Bigg \}
\notag
\\
&=\sum_{n=1}^{\infty } \pi_{t,n}^{L} \sum_{i=1}^n \int_{0}^{t} \Delta_i(v)\bigg ( n{(t-v)^{n-1}\over t^n} \bigg )d v.
\label{ii-7b}
\end{align}
Since $N(t)$ is  the mixed Poisson process with the structure cdf $L(\lambda)$, using formula (\ref{mixed-p}) on the right-hand side of equation (\ref{ii-7b}), we obtain
\begin{align}
\mathbf{E}[S(t)]
&=\sum_{n=1}^{\infty }\bigg\{
\bigg ( \int_{0}^{\infty}{(\lambda t)^n \over n!} e^{-\lambda t}dL(\lambda) \bigg )
\sum_{i=1}^{n} \int_{0}^t \Delta_i(v)\bigg ( n\,{(t-v)^{n-1}\over t^n}\bigg ) dv \bigg\}
\notag
\\
&=\int_{0}^{\infty} \int_{0}^t \lambda e^{-\lambda v} \sum_{n=1}^{\infty }\bigg\{
\bigg ( {(t-v)^{n-1}\lambda^{n-1}\over (n-1)!} e^{-\lambda (t-v)} \bigg )
\sum_{i=1}^{n} \Delta_i(v)\bigg\}\, dv\,dL(\lambda)
\notag
\\
&=\int_{0}^{\infty}  \int_{0}^t \lambda e^{-\lambda v}\sum_{n=0}^{\infty }\bigg\{
\bigg ( {(t-v)^{n}\lambda^{n}\over n!} e^{-\lambda (t-v)} \bigg )
\sum_{i=1}^{n+1} \Delta_i(v)\bigg\}\, dv\,dL(\lambda)
\notag
\\
&=\int_{0}^{\infty}  \int_{0}^t \lambda e^{-\lambda v}\sum_{n=0}^{\infty }\bigg\{
\mathbf{P}[N_{\lambda }(t-v)=n]
\sum_{i=1}^{n+1} \Delta_i(v)\bigg\}\, dv\,dL(\lambda)
\notag
\\
&= \int_{0}^{\infty}  \int_{0}^t \lambda e^{-\lambda v}
\mathbf{E}\big [Q\big (N_{\lambda }(t-v)+1|\,v \big )\big ]\, dv\,dL(\lambda).
\label{calcul-1}
\end{align}
This completes the proof of Theorem \ref{theorem-2}.
\end{proof}

\begin{proof}[Proof of Theorem \ref{theorem-1}]
Since $F_t(x)=x / t$ and thus $f_t(x)=1 / t$ for all $x\in [0,t]$, equation \eqref{ii-4} becomes
\begin{multline}
\mathbf{E}[S(t)]
=\sum_{n=1}^{\infty } \pi_{t,n}^{L} \Bigg \{ n\int_{0}^{t}\mathbf{E}\big [ X_1 | T_{1}=y \big ]{(t-y)^{n-1}\over t^n}dy
\\
 + \sum_{i=2}^n {n! \over (i-2)!(n-i)!} \int_{0}^{t} \int_{x}^{t}
\mathbf{E}\big [ X_i | T_{i-1}=x, T_{i}=y \big ]
{x^{i-2}(t-y)^{n-i}\over t^n}dydx\Bigg \} .
\label{ii-5}
\end{multline}
In the inner integral on the right-hand side of equation \eqref{ii-5}, we change the variable of integration $y$ into $v$ using the relationship $y=x+v$. Equation \eqref{ii-5} becomes
\begin{multline}
\mathbf{E}[S(t)]
=\sum_{n=1}^{\infty } \pi_{t,n}^{L} \Bigg \{ n\int_{0}^{t}\mathbf{E}\big [ X_1 | T_{1}=y \big ] {(t-y)^{n-1}\over t^n} dy
\\
 + \sum_{i=2}^n {n! \over (i-2)!(n-i)!}\int_{0}^{t} \int_{0}^{t-x}
\mathbf{E}\big [ X_i | T_{i-1}=x, T_{i}=x+v \big ]
 {x^{i-2}(t-x-v)^{n-i}\over t^n} dvdx\Bigg \} .
\label{ii-6b}
\end{multline}
Since $N(t)$ is  the mixed Poisson process with the structure cdf $L(\lambda )$, we apply formula (\ref{mixed-p}) on the right-hand side of equation (\ref{ii-6b}), rearrange terms, and arrive at equation (\ref{ii-6}). This concludes the proof of Theorem \ref{theorem-1}.
\end{proof}

\begin{proof}[Proof of Corollary \ref{cor-0}]
Since $\mathbf{E}[ X_i|V_i=v]=\mathbf{E} [ X_1| V_1=v]$ for all $i\ge 1$, we have that  $Q(n|\,v)=n\mathbf{E} [ X_1| V_1=v]$. Since $\mathbf{E} [N_{\lambda }(t-v)+1]=\lambda(t-v)+1$, equation \eqref{th-0aa} completes the proof of Corollary \ref{cor-0}.
\end{proof}

\begin{proof}[Proof of Corollary \ref{cor-1}]
Equation \eqref{eq-12} follows from \eqref{th-0bb} by noting that the homogenous Poisson process is a mixed Poisson process with the structure cdf $L(z)=\mathbf{1}\{\lambda \le z\}$. This completes the proof of Corollary \ref{cor-1}.
\end{proof}

\begin{proof}[Proof of Corollary \ref{cor-4}]
Under assumption \eqref{eqcd}, we have that
\[
\mathbf{E}[X_i|V_i=v]=\left(1-e^{-\beta v} \right)\mathbf{E}[Y_{\ell}]+ e^{-\beta v}\mathbf{E}[Y_s].
\]
Applying this formula on the right-hand side of equation \eqref{th-0bb}, we have that the inner integral there is equal to
\begin{equation}
\mathbf{E}[Y_{\ell}]\int_{0}^t \left(1-e^{-\beta v} \right)\lambda e^{-\lambda v} \big ( \lambda(t-v)+1 \big ) dv
+\mathbf{E}[Y_s]\int_{0}^t e^{-\beta v} \lambda e^{-\lambda v} \big ( \lambda(t-v)+1 \big ) dv .
\label{exp-boudr-2}
\end{equation}
The right-most integral of (\ref{exp-boudr-2}) is equal to $\mathfrak{A}(t,\lambda,\beta )$, and the left-most one is equal to $\lambda t-\mathfrak{A}(t,\lambda,\beta )$. This completes the proof of Corollary \ref{cor-4}.
\end{proof}

\begin{proof}[Proof of Theorem \ref{theorem-12bb}]
Since the claim sizes $X_i$ depend only on the preceding inter-claim times $V_i$ and the conditional variables $X_i|V_i$ are independent, quantities $A_{t,n}^{L}$ and $B_{t,n}^{L}$ in Theorem \ref{th-new-12} reduce to the following ones:
\begin{multline}\label{quantity-a}
A_{t,n}^{L}= n \int_{0}^{t}\Theta_1(y){(t-y)^{n-1}\over t^n} dy
\\
+ \sum_{i=2}^n {n! \over (i-2)!(n-i)!}\int_{0}^{t} \int_{0}^{t-x}
\Theta_i(v) {x^{i-2}(t-x-v)^{n-i}\over t^n} dv dx
\end{multline}
and
\begin{align}
B_{t,n}^{L}&=n(n-1) \int_{0}^{t} \int_{0}^{t-y} \Delta_1(y)\Delta_2(v){(t-y-v)^{n-2} \over t^n}dv dy
\notag
\\
&\quad + \sum_{j=3}^{n} {n! \over (j-3)!(n-j)!} \int_{0}^{t}\int_{y}^{t} \int_{0}^{t-w} \Delta_1(y)\Delta_j(v)
\notag
\\
& \hspace*{45mm} \times {(w-y)^{j-3}(t-w-v)^{n-j}\over t^n}dvdwdy
\notag
\\
&\quad + \sum_{i=2}^{n-1}{n! \over (i-2)!(n-i-1)!} \int_{0}^{t}\int_{0}^{t-y} \int_{0}^{t-y-u}\Delta_i(u)\Delta_{i+1}(v)
\notag
\\
& \hspace*{45mm} \times {y^{i-2}(t-y-u-v)^{n-i-1}\over t^n}dvdudy
\notag
\\
&\quad + \sum_{i=2}^{n-2}\sum_{j=i+2}^{n}{n! \over (i-2)!(j-i-2)!(n-j)!}
\int_{0}^{t}\int_{0}^{t-x} \int_{x+u}^{t}\int_{0}^{t-w}
\Delta_i(u)\Delta_j(v)
\notag
\\
& \hspace*{45mm} \times { x^{i-2}(w-x-u)^{j-i-2}(t-w-v)^{n-j} \over t^n}dvdwdudx.
\label{ii-42bb}
\end{align}
We now calculate the sum on the right-hand side of equation \eqref{quantity-a}. Interchanging the order of integration, we obtain that
\[
\sum_{i=2}^n {n! \over (i-2)!(n-i)!}\int_{0}^{t} \int_{0}^{t-x}
\Theta_i(v) {x^{i-2}(t-x-v)^{n-i}\over t^n} dv dx
= \sum_{i=2}^n n \int_{0}^{t}\Theta_i(v){(t-v)^{n-1}\over t^n} dv.
\]
This implies that
\begin{align}
A_{t,n}^{L}= \sum_{i=1}^n n \int_{0}^{t}\Theta_i(v){(t-v)^{n-1}\over t^n} dv.
\label{ii-42bb1}
\end{align}
We next calculate the integrals that make up the quantity $B_{t,n}^{L}$. First, interchanging the order of integration and utilizing the definition and properties of the beta function (e.g., Abramowitz and Stegun, 1972, formula 6.2.2), we obtain
\begin{multline}
\sum_{j=3}^{n} {n! \over (j-3)!(n-j)!} \int_{0}^{t}\int_{y}^{t} \int_{0}^{t-w} \Delta_1(y)\Delta_j(v) {(w-y)^{j-3}(t-w-v)^{n-j}\over t^n}dvdwdy
\\
=\sum_{j=3}^{n} n(n-1) \int_{0}^{t} \int_{0}^{t-y} \Delta_1(y)\Delta_j(v){(t-y-v)^{n-2} \over t^n}dv dy .
\label{ii-42bb2}
\end{multline}
In a similar fashion we show that
\begin{multline}
\sum_{i=2}^{n-1}{n! \over (i-2)!(n-i-1)!} \int_{0}^{t}\int_{0}^{t-y} \int_{0}^{t-y-u}\Delta_i(u)\Delta_{i+1}(v)
{y^{i-2}(t-y-u-v)^{n-i-1}\over t^n}dvdudy
\\
=\sum_{i=2}^{n-1} n(n-1)\int_{0}^{t} \int_{0}^{t-u} \Delta_i(u)\Delta_{i+1}(v){(t-u-v)^{n-2} \over t^n}dv du .
\label{ii-42bb3}
\end{multline}
Finally, we calculate the double sum on the right-hand side of equation \eqref{ii-42bb}. Replacing $w$ by $w-x-u$, we obtain
\begin{align}
&\sum_{i=2}^{n-2}\sum_{j=i+2}^{n}{n! \over (i-2)!(j-i-2)!(n-j)!}
\int_{0}^{t}\int_{0}^{t-x} \int_{x+u}^{t}\int_{0}^{t-w}
\Delta_i(u)\Delta_j(v)
\notag
\\
& \hspace*{65mm} \times { x^{i-2}(w-x-u)^{j-i-2}(t-w-v)^{n-j} \over t^n}dvdwdudx
\notag
\\
&=\sum_{i=2}^{n-2}\sum_{j=i+2}^{n}{n! \over (i-2)!(j-i-2)!(n-j)!}
\int_{0}^{t}\int_{0}^{t-x} \int_{0}^{t-x-u}\int_{0}^{t-x-u-w}
\Delta_i(u)\Delta_j(v)
\notag
\\
& \hspace*{65mm} \times { x^{i-2}w^{j-i-2}(t-x-u-v-w)^{n-j} \over t^n}dvdwdudx
\notag
\\
&=\sum_{i=2}^{n-2}\sum_{j=i+2}^{n}{n! \over (i-2)!(j-i-2)!(n-j)!}
\int_{0}^{t}\int_{0}^{t-u} \Delta_i(u)\Delta_j(v)\int_{0}^{t-u-v}\int_{0}^{t-u-v-w}
\notag
\\
& \hspace*{65mm} \times { x^{i-2}w^{j-i-2}(t-x-u-v-w)^{n-j} \over t^n}dxdwdvdu
\notag
\\
&=\sum_{i=2}^{n-2}\sum_{j=i+2}^{n}{n! \over (j-i-2)!(n-j+i-1)!}
\int_{0}^{t}\int_{0}^{t-u} \Delta_i(u)\Delta_j(v)
\notag
\\
& \hspace*{65mm} \times \int_{0}^{t-u-v}{ w^{j-i-2}(t-u-v-w)^{n-j+i-1} \over t^n}dwdvdu
\notag
\\
&=\sum_{i=2}^{n-2}\sum_{j=i+2}^{n}n(n-1)
\int_{0}^{t}\int_{0}^{t-u} \Delta_i(u)\Delta_j(v)
{ (t-u-v)^{n-2} \over t^n}dvdu .
\label{ii-42bb4}
\end{align}
Using equations \eqref{ii-42bb2}--\eqref{ii-42bb4} on the right-hand side of \eqref{ii-42bb}, we obtain
\begin{align}
B_{t,n}^{L}
&= n(n-1) \int_{0}^{t} \int_{0}^{t-y} \Delta_1(y)\Delta_2(v){(t-y-v)^{n-2} \over t^n}dv dy
\notag
\\
&\qquad + \sum_{j=3}^{n} n(n-1) \int_{0}^{t} \int_{0}^{t-y} \Delta_1(y)\Delta_j(v){(t-y-v)^{n-2} \over t^n}dv dy
\notag
\\
&\qquad + \sum_{i=2}^{n-1} n(n-1)\int_{0}^{t} \int_{0}^{t-u} \Delta_i(u)\Delta_{i+1}(v){(t-u-v)^{n-2} \over t^n}dv du
\notag
\\
&\qquad + \sum_{i=2}^{n-2}\sum_{j=i+2}^{n}
n(n-1)\int_{0}^{t}\int_{0}^{t-u} \Delta_i(u)\Delta_j(v)
{ (t-u-v)^{n-2} \over t^n}dvdu
\notag
\\
&= n(n-1) \int_{0}^{t} \int_{0}^{t-y} \sum_{1\le i < j \le n}\Delta_i(y)\Delta_j(v){(t-y-v)^{n-2} \over t^n}dv dy.
\label{ii-42bb6}
\end{align}
We have from  (\ref{ii-42a}), (\ref{ii-42bb1}), and (\ref{ii-42bb6}) that
\begin{align}
\mathbf{E}[S^2(t)]
&=\int_{0}^{\infty}\int_{0}^{t}\bigg \{\sum_{n=1}^{\infty } {(\lambda t)^n \over n!} e^{-\lambda t} \Theta(n|\,v)\,n {(t-v)^{n-1}\over t^n} \bigg \}dvdL(\lambda)
\notag
\\
&\quad + \int_{0}^{\infty}\int_{0}^{t} \int_{0}^{t-y}\bigg \{ \sum_{n=1}^{\infty } {(\lambda t)^n \over n!} e^{-\lambda t} \Upsilon(n|\,y,v)\, n(n-1){(t-y-v)^{n-2} \over t^n}\bigg \}dv dy dL(\lambda)
\notag
\\
&=\int_{0}^{\infty}  \int_{0}^t \lambda e^{-\lambda v}
\mathbf{E}\big [\Theta \big (N_{\lambda }(t-v)+1|\,v \big )\big ]\, dv\,dL(\lambda)
\notag
\\
&\quad + \int_{0}^{\infty}\int_{0}^{t} \int_{0}^{t-y}\bigg \{ \sum_{n=2}^{\infty } {\lambda^n (t-y-v)^{n-2}\over (n-2)!} e^{-\lambda t} \Upsilon(n|\,y,v)\bigg \}dv dy dL(\lambda)  .
\label{ii-42ccc}
\end{align}
In the above calculations, we have obtained the expectation $\mathbf{E} [\Theta (N_{\lambda }(t-v)+1|\,v )]$ inside the double integral using analogous considerations as those in equation (\ref{th-0aa}). Similarly, we calculate the sum inside the triple integral on the right-hand side of equation \eqref{ii-42ccc}. This leads us to equation (\ref{ii-42ddd}). The proof of Theorem \ref{theorem-12bb} is finished.
\end{proof}

\begin{proof}[Proof of Corollary \ref{cor-12bb}]
Since $\Theta_1(v)=\Theta_i(v)$ and $\Delta_1(v)=\Delta_i(v)$ for all $i\ge 1$, we have
\begin{align*}
\mathbf{E} [\Theta(N_{\lambda }(t-v)+1|v)] = & \mathbf{E}[N_\lambda(t-v)+1]\Theta_1(v) \\
= & (\lambda (t-v)+1)\Theta_1(v)
\end{align*}
and
\begin{align*}
\mathbf{E} [\Upsilon(N_\lambda(t-v)+2|y,v)] = & \mathbf{E}[(N_{\lambda }(t-y-v)+2)(N_{\lambda }(t-y-v)+1)]\Delta_1(y)\Delta_1(v) \\
= & \left(\big (\lambda (t-y-v)+2 \big )^2-2\right)\Delta_1(y)\Delta_1(v).
\end{align*}
Using these formulas on the right-hand side of equation \eqref{ii-42ddd}, we complete the proof of Corollary \ref{cor-12bb}.
\end{proof}

\begin{proof}[Proof of Corollary \ref{cor-4g}]
Using formula \eqref{eqcd}, we obtain from equation \eqref{ii-42ddd2} that
\begin{multline}
\mathbf{E}[S^2(t)]
=\int_{0}^{\infty}  \int_{0}^t \lambda e^{-\lambda v} \Big ( \left(1-e^{-\beta v} \right)\mathbf{E}[Y_{\ell}^2]+ e^{-\beta v}\mathbf{E}[Y_s^2] \Big )
\big (\lambda (t-v)+1\big ) dv\,dL(\lambda)
\\
+ \int_{0}^{\infty}\mathfrak{B}(t,\lambda ) dL(\lambda) ,
\label{ii-42ddd3a}
\end{multline}
where we have used the notation
\begin{multline*}
\mathfrak{B}(t,\lambda )=\int_{0}^{t} \int_{0}^{t-y}\lambda^2 e^{-\lambda (y+v)}
\Big ( \left(1-e^{-\beta y} \right)\mathbf{E}[Y_{\ell}]+ e^{-\beta y}\mathbf{E}[Y_s] \Big )
\\
\times \Big ( \left(1-e^{-\beta v} \right)\mathbf{E}[Y_{\ell}]+ e^{-\beta v}\mathbf{E}[Y_s] \Big ) \left(\big (\lambda (t-y-v)+2 \big )^2-2\right)dv dy .
\end{multline*}
We already know from the proof of Corollary \ref{cor-4} that
\begin{multline}\label{formula-1aaa}
\int_{0}^t \lambda e^{-\lambda v} \Big ( \left(1-e^{-\beta v} \right)\mathbf{E}[Y_{\ell}^2]+ e^{-\beta v}\mathbf{E}[Y_s^2] \Big )
\big (\lambda (t-v)+1\big ) dv
\\
 =  \mathbf{E}[Y_{\ell}^2] \big(\lambda t-\mathfrak{A}(t,\lambda,\beta ) \big)
+\mathbf{E}[Y_s^2] \mathfrak{A}(t,\lambda,\beta ) .
\end{multline}
This givens the first two summands on the right-hand side of equation (\ref{ii-42ddd4}). It remains to calculate the right-most integral of \eqref{ii-42ddd3a}. For this, we rewrite $\mathfrak{B}(t,\lambda )$ as follows:
\begin{align*}
\mathfrak{B}(t,\lambda )=& \big (\mathbf{E}[Y_{\ell}]\big )^2\lambda^2\int_{0}^{t} e^{-\lambda y}\left(1-e^{-\beta y} \right)\int_{0}^{t-y} e^{-\lambda v} \left(1-e^{-\beta v} \right)
\left(\big (\lambda (t-y-v)+2 \big )^2-2\right)dv dy
\\
&+ 2 \mathbf{E}[Y_{\ell}]\mathbf{E}[Y_s]\lambda^2\int_{0}^{t} e^{-\lambda y}\left(1-e^{-\beta y} \right)\int_{0}^{t-y} e^{-\lambda v}
e^{-\beta v}
\left(\big (\lambda (t-y-v)+2 \big )^2-2\right)dv dy
\\
&+ \big ( \mathbf{E}[Y_s]\big )^2 \lambda^2\int_{0}^{t} e^{-\lambda y}e^{-\beta y}\int_{0}^{t-y} e^{-\lambda v}
  e^{-\beta v}\left(\big (\lambda (t-y-v)+2 \big )^2-2\right)dv dy.
\end{align*}
To simplify the presentation, we next introduce a general notation encompassing the $\mathfrak{B}$-quantities in formulas (\ref{f-1}), (\ref{f-3}), and (\ref{f-2}):
\[
\mathfrak{B}(t,\lambda,\Theta,\Delta )=\lambda^2\int_{0}^{t} e^{-(\lambda+\Theta) y}\int_{0}^{t-y} e^{-(\lambda+\Delta) v}\left(\big (\lambda (t-y-v)+2 \big )^2-2\right)dv dy.
\]
Note the symmetry: $\mathfrak{B}(t,\lambda,\Theta,\Delta )=\mathfrak{B}(t,\lambda,\Delta,\Theta )$. After a somewhat tedious checking of formulas (\ref{f-1}) and (\ref{f-3}), we obtain that
\begin{multline*}
\mathfrak{B}(t,\lambda )= \big (\mathbf{E}[Y_{\ell}]\big )^2
\Big ( \mathfrak{B}(t,\lambda,0,0 )-2\mathfrak{B}(t,\lambda,0,\beta )+\mathfrak{B}(t,\lambda,\beta,\beta ) \Big )
\\
+ 2 \mathbf{E}[Y_{\ell}]\mathbf{E}[Y_s]\Big ( \mathfrak{B}(t,\lambda,0,\beta )-\mathfrak{B}(t,\lambda,\beta,\beta ) \Big )
+ \big ( \mathbf{E}[Y_s]\big )^2 \mathfrak{B}(t,\lambda,\beta,\beta ).
\end{multline*}
This finishes the proof of Corollary \ref{cor-4g}.
\end{proof}

\end{document}